\newtheorem{theorem}{Theorem}[section]
\newtheorem{lemma}{Lemma}[section]
\newtheorem{definition}{Definition}
\newtheorem{corollary}{Corollary}[section]
\numberwithin{equation}{section}
 \def\@evenhead{\vbox{\hbox to \textwidth{\thepage\hfil\sl\leftmark\strut}\hrule}}
 \def\@oddhead{\vbox{\hbox to \textwidth{\rightmark\hfill\thepage\strut}\hrule}}
\begin{document}
 \sloppy

\begin{center}
\textbf {ON THE BOUNDEDNESS OF THE MAXIMAL AND FRACTIONAL MAXIMAL, POTENTIAL OPERATORS IN THE GLOBAL MORREY-TYPE SPACES WITH VARIABLE EXPONENTS}
\end{center}

\vskip 0.3cm

\centerline{\textbf{N.A.~Bokayev, Zh.M.~Onerbek}}
\markboth{\hfill{\footnotesize\rm  N.\,A.~Bokayev, Zh.\,M.~Onerbek}\hfill}
{\hfill{\footnotesize\sl On boundedness of the Riesz potential and fractional maximal operators}\hfill}
\vskip 0.3cm

\vskip 0.7 cm

\noindent \textbf{Key words:} boundedness,  Riesz potential, fractional maximal operator, global Morrey-type spaces with variable exponent.

\vskip 0.2cm

\noindent \textbf{AMS Mathematics Subject Classification:} 31C99, 31B99.
\vskip 0.2cm

\noindent\textbf{Abstract.} We consider the global Morrey-type spaces ${GM}_{p(.),\theta(.),w(.)}(\Omega)$ with variable exponents $p(x)$, $\theta(x)$ and general function $w(x,r)$ defining these spaces. In the case of unbounded sets $\Omega\subset{\mathbb{R}}^{n}$, we prove boundedness of the Hardy--Littlewood maximal operator, potential type operator in these spaces.
\section{Introduction}
In this paper we consider the global Morrey-type spaces ${GM}_{p(.),\theta(.),w(.)}(\Omega)$ with variable exponents $p(.)$, $\theta(.)$ and a general function $w(x,r)$ defining a Morrey-type norm.
The Morrey spaces ${M}_{p,\lambda}$ are introduced in~{\cite{Morrey}} in relation to the study of partial differential equations. Many classical operators of harmonic analysis (for example, maximal, fractional maximal, potential operators) were studied in the  Morrey-type spaces with constant exponents $p$, $\theta$~{\cite{Burenkov1, Burenkov2, Burenkov3}}.
The Morrey spaces also attracted attention of researchers in the area of variable exponent analysis; see~{\cite{Almeida1, Almeida2, Almeida3, Almeida4, Almeida5, Alvarez}}.
The  Morrey spaces ${\mathcal{L}}_{p(.),\lambda(.)}$ with variable exponent $p(.)$, $\lambda(.)$ were introduced and studied in~{\cite{Almeida1}}. The general version ${M}_{p(.),w(.)}(\Omega)$, $\Omega\subset{\mathbb{R}}^{n}$ were introduced and studied in~{\cite{Guliyev2}} in the case of bounded sets $\Omega\subset{\mathbb{R}}^{n}$, and in~{\cite{Guliyev}} in the case of unbounded sets $\Omega\subset{\mathbb{R}}^{n}$.
The boundedness of maximal and potential type operators in the generalized Morrey-type spaces with a variable exponent were considered in~{\cite{Guliyev2}} in the case of bounded sets $\Omega\subset{\mathbb{R}}^{n}$, in~{\cite{Guliyev}} in the case of unbounded sets $\Omega\subset{\mathbb{R}}^{n}$.

Let $f\in{L}_{loc}({\mathbb{R}}^{n})$.The  Hardy--Littlewood maximal operator is defined as
\begin{equation*}
Mf(x)=\mathop{sup}_{r>0}\frac{1}{|B(x,r)|}\int_{\tilde{B}(x,r)}|f(y)|dy
\end{equation*}
where $B(x,r)$ is the ball in ${\mathbb R}^{n}$ centered at the point $x\in{\mathbb{R}}^{n}$ and of the radius $r$, $\tilde{B}(x,r)=B(x,r)\cap{\Omega}$, $\Omega\subset{\mathbb{R}}^{n}$.

The fractional maximal operator of variable order $\alpha(x)$ is defined as
\begin{equation*}
{M}^{\alpha(.)}f(x)=\mathop{sup}_{r>0}{|B(x,r)|}^{-1+\frac{\alpha(x)}{n}}\int_{\tilde{B}(x,r)}|f(y)|dy, 0\leq\alpha(x)<n.
\end{equation*}
In the case of $\alpha(x)=\alpha=const$, this operator coincides with the classical fractional maximal operator ${M}^{\alpha}$. If $\alpha(x)=0$, then ${M}^{\alpha(.)}$ coincides with the operator $M$.

The Riesz potential ${I}^{\alpha(x)}$ of variable order  $\alpha(x)$ is defined by the following equality:
$$
{I}^{\alpha(x)}f(x)=\int_{{\mathbb{R}}^{n}}\frac{f(y)}{\mathop{\left|x-y\right|}
\nolimits^{n-\alpha(x)}}dy,   0<\alpha(x)<n.
$$
In the case of $\alpha(x)=\alpha=const$, this operator coincides with the classical Riesz potential ${I}^{\alpha}$.

\section{Preliminaries.Variable Exponent Lebesgue Spaces~${L}_{p(.)}$. Generalized variable exponent Morrey spaces ${M}_{p(.),w(.)}$}
Let $p(x)$ be a measurable function on an open set~$\Omega\subset{\mathbb{R}}^{n}$ with values $(1,\infty)$.
Let
\begin{equation}\label{2.1}
1<p_{-}\leq p(x)\leq p_{+}<\infty
\end{equation}
where
${p}_{-}={p}_{-}(\Omega)=\mathop{essinf}_{x\in\Omega}p(x)$, ${p}_{+} ={p}_{+}(\Omega)=\mathop{essup}_{x\in\Omega}p\left(x\right)$.
We denote by  $\mathop{L}\nolimits_{p(.)}(\Omega)$ the space of all measurable functions $f(x)$ on $\Omega$ such that
$$
\mathop{J}\nolimits_{p(.)} (f)=\int_{\Omega}\mathop{\left[f(x)\right]}\nolimits^{p(x)} dx<\infty,
$$
where the norm is defined as follows
$$
\mathop{\left|\left|f\right|\right|}\nolimits_{p\left(.\right)} =\inf \left\{\eta >0,\mathop{J}\nolimits_{p(.)} \right. \left(\frac{f}{\eta} \right)\le 1\left.\right\}.
$$
This is a Banach space. The conjugate exponent ${p}^{'}$ is defined by the formula
$$
{p}^{'}(x)=\frac{p(x)}{p(x)-1}
$$
Holder inequality for the variable exponents $p(.)$, ${p}^{'}(.)$ is
$$
\int_{\Omega}f(x)g(x)dx\leq{C(p)}{\|f\|}_{{L}_{p(.)}(\Omega)}{\|g\|}_{{L}_{{p}^{'}(.)}(\Omega)},
$$
where $C(p)=\frac{1}{{p}_{-}}+\frac{1}{{p}_{-}^{'}}$.

The variable exponent Lebesgue spaces ${L}_{p(.)}$ were introduced in~{\cite{Sharafutdtinov}}, and were investigated in~{\cite{Diening1, Diening2}}.

$\mathcal{P}(\Omega)$ is the set of measurable functions $p:\Omega\rightarrow[1,\infty)$,
${\mathcal{P}}^{\log}(\Omega)$  is the set of  measurable functions $p(x)$ satisfying the
local log-condition
$$
\left|p(x)-p(y)\right|\leq\frac{{A}_{p}}{-ln\left|x-y\right|},
:\left| x-y\right|\leq\frac{1}{2}, {x},y\in\Omega
$$
where ${A}_{p}$ is independent of $x$ и $y$.
${\mathbb{P}}^{\log}(\Omega)$ is the set of  measurable functions $p(x)$ satisfying~{\eqref{2.1}} and the log-condition.
In the case of $\Omega$ is an unbounded set, we denote by ${\mathbb{P}}_{\infty}^{log}(\Omega)$ the set of exponents which is a subset of the set of ${\mathbb{P}}^{\log}(\Omega)$ and satisfying the decay condition
\begin{equation*}
|p(x)-p(\infty)|\leq{A}_{\infty}ln(2+|x|),  x\in{\mathbb{R}}^{n}.
\end{equation*}
Let ${\mathbb{A}}^{log}(\Omega)$ be the set of bounded exponents $\alpha:\Omega\rightarrow{\mathbb{R}}$ satisfying the log-condition.

Let $\Omega$ be an open bounded set, $p\in{\mathbb{P}}^{\log}(\Omega)$ and $\lambda(x)$ be a measurable function on $\Omega$ with values in $[0,n]$. The variable Morrey space ${\mathcal{L}}_{p(.),\lambda(.)}(\Omega)$ is introduced in~{\cite{Burenkov2}} with the norm
$$
{\|f\|}_{{\mathcal{L}}_{p(.),\lambda(.)}(\Omega)}=\mathop{sup}_{x\in\Omega,t>0}{t}^{-\frac{\lambda(x)}{p(x)}}{\|f\|}_{{L}_{p(.)}(\tilde{B}(x,t))}.
$$

Let $w(x,r)$ be nonnegative measurable function on $\Omega$, where $\Omega \subset {\mathbb{R}}^{n}$ is a open bounded set. The generalized Morrey type space ${M}_{p(.),w(.)}(\Omega)$ with variable exponent is defined in~{\cite{Guliyev2}} with the norm
$$
{||f||}_{{M}_{p(.),w(.)}(\Omega)}=\mathop{sup}_{x\in\Omega,r>0}\frac{{r}^{-\frac{n}{p(x)}}}{w(x,r)}{||f||}_{{L}_{p(.)}(\tilde{B}(x,r))}.
$$
Let $w(x,r)$ be nonnegative measurable function on $\Omega$, where $\Omega \subset {\mathbb{R}}^{n}$ is a open unbounded set .The generalized Morrey type space ${M}_{p(.),w(.)}(\Omega)$ with variable exponent is defined in~{\cite{Guliyev}} with the norm
$$
{||f||}_{{M}_{p(.),w(.)}(\Omega)}=\mathop{sup}_{x\in\Omega,r>0}\frac{{||f||}_{{L}_{p(.)}(\tilde{B}(x,r))}}{w(x,r)}.
$$
Let
$$
{\eta}_{p}(x,r)=\begin{cases}
\frac{n}{p(x)},&\text{if $r\leq1$;}\\
\frac{n}{p(\infty)},&\text{if $r>1$.}
\end{cases}
$$
\begin{definition}
{\rm
Let $p\in{P}^{log}(\Omega)$, $w(x,r)$ be a positive function on $\Omega\times[0,\infty]$, where $\Omega\in{\mathbb R}^{n}$.
Global Morrey-type space with variable exponent ${GM}_{p(.),\theta(.),w(.)}(\Omega)$ is defined as the set of functions $f\in{L}_{p(.)}^{loc}(\Omega)$ with finite norm
\begin{equation*}
{||f||}_{{GM}_{p(.),\theta(.),w(.)}(\Omega)}=\mathop{sup}_{x\in\Omega}{||w(x,r){r}^{-{\eta}_{p}(x,r)}{||f||}_{{L}_{p(.)}(\tilde{B}(x,r))}||}_{{L}_{\theta(.)}(0,\infty)}.
\end{equation*}
}
\end{definition}

We assume that the positive measurable function $w(x,r)$ satisfies the condition
$$
\mathop{sup}_{x\in\Omega}{||w(x,r)||}_{{L}_{\theta(.)}(0,\infty)}<\infty.
$$
Then the space contains bounded functions and thereby is nonempty.
In the case of $w(x,r)={r}^{-\frac{\lambda(x)}{p(x)}+{\eta}_{p}(x,r)}$, the corresponding space is denoted by ${GM}_{p(.),\theta(.)}^{\lambda(.)}$:
$$
{GM}_{p(.),\theta(.)}^{\lambda(.)}(\Omega)={{GM}_{p(.),w(.),\theta}|}_{w(x,r)={r}^{-\frac{\lambda(x)}{p(x)}+{\eta}_{p}(x,r)}},
$$
$$
{||f||}_{{GM}_{p(.),\theta(.)}^{\lambda(.)}(\Omega)}=
\mathop{sup}_{x\in\Omega}{||w(x,r){r}^{-\frac{\lambda(x)}{p(x)}}{||f||}_{{L}_{p(.)}(\tilde{B}(x,r))}||}_{{L}_{\theta(.)}(0,\infty)}.
$$

In the case of $\theta=\infty$, the  space ${GM}_{p(.),\infty,w(.)}(\Omega)$ coincides the generalized Morrey space with variable exponent ${M}_{p(.),w(.)}(\Omega)$ with finite quasi-norm
$$
{||f||}_{{M}_{p(.),w(.)}(\Omega)}=\mathop{sup}_{x\in\Omega}w(x,r){r}^{-{\eta}_{p}(x,r)}{||f||}_{{L}_{p(.)}(\tilde{B}(x,r))}.
$$
If $p(.)=p=const$, $\theta(x)=\theta=const$ then the space ${GM}_{p(.),\theta(.),w(.)}(\Omega)$ coincides with the ordinary global Morrey space ${GM}_{p,\theta,w}(\Omega)$, considered in the works by V.I.~Burenkov, V.~Guliev and others~{\cite{Burenkov1, Burenkov2, Burenkov3}}.

The Spanne and The Adams type theorems were proved in~{\cite{Almeida1}} for bounded sets $\Omega$.

\begin{theorem}Suppose that $p\in{\mathbb{P}}_{\infty}^{log}(\Omega)$ and
\begin{equation*}
\mathop{sup}_{t>r}\frac{\mathop{essinf}_{t<s<\infty}{w}_{1}(x,s)}{{t}^{{\eta}_{p}(x,t)}}\leq{C}\frac{{w}_{2}(x,r)}{{r}^{{\eta}_{p}(x,r)}}
\end{equation*}
where $C$ is independent of $x$ and $r$. Then the maximal operator $M$ from ${M}_{p(.),{w}_{1}(.)}(\Omega)$ to ${M}_{p(.),{w}_{2}(.)}(\Omega)$  is bounded.
\end{theorem}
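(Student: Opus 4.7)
The plan is to follow the classical Guliyev two-step scheme for boundedness of operators in generalized Morrey spaces, adapted to the variable exponent setting: first establish a local estimate for $Mf$ on a ball $\tilde{B}(x,r)$ in terms of $\|f\|_{L_{p(.)}(\tilde{B}(x,t))}$ for $t>r$, then use the hypothesis on $w_1, w_2$ to transfer the local estimate to a norm estimate.

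The first step is to prove the pointwise-in-$(x,r)$ estimate
\begin{equation*}
\|Mf\|_{L_{p(.)}(\tilde{B}(x,r))} \leq C\,r^{\eta_p(x,r)} \sup_{t > r} t^{-\eta_p(x,t)} \|f\|_{L_{p(.)}(\tilde{B}(x,t))}
\end{equation*}
with $C$ independent of $x$, $r$, and $f$. To prove this, I would decompose $f = f_1 + f_2$ with $f_1 = f\chi_{\tilde{B}(x,2r)}$, and handle each piece separately. For $f_1$, the boundedness of $M$ on $L_{p(.)}(\Omega)$ (available because $p \in \mathbb{P}_\infty^{\log}(\Omega)$, by Diening's theorem) gives $\|Mf_1\|_{L_{p(.)}(\tilde{B}(x,r))} \leq C\|f\|_{L_{p(.)}(\tilde{B}(x,2r))}$, a contribution that can be absorbed into the supremum on the right by taking $t=2r$. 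For $f_2$, the observation that any ball $B(z,\rho)$ with $z\in B(x,r)$ meeting the support of $f_2$ must have $\rho \gtrsim r$ yields
\begin{equation*}
Mf_2(z) \leq C \sup_{t > r} t^{-n} \int_{\tilde{B}(x,t)} |f(y)|\,dy, \qquad z \in \tilde{B}(x,r),
\end{equation*}
after which Hölder's inequality in $L_{p(.)}$ with the characteristic-function estimate $\|\chi_{\tilde{B}(x,t)}\|_{L_{p'(.)}} \leq C\,t^{n-\eta_p(x,t)}$, followed by multiplication by $\|\chi_{\tilde{B}(x,r)}\|_{L_{p(.)}} \leq C\,r^{\eta_p(x,r)}$, completes the local estimate.

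For the second step, since $s \mapsto \|f\|_{L_{p(.)}(\tilde{B}(x,s))}$ is nondecreasing, for every $s > t$ one has
\begin{equation*}
\|f\|_{L_{p(.)}(\tilde{B}(x,t))} \leq \|f\|_{L_{p(.)}(\tilde{B}(x,s))} \leq w_1(x,s)\,\|f\|_{M_{p(.),w_1(.)}(\Omega)},
\end{equation*}
and taking essential infimum over $s > t$ gives
\begin{equation*}
t^{-\eta_p(x,t)} \|f\|_{L_{p(.)}(\tilde{B}(x,t))} \leq \frac{\operatorname{essinf}_{s > t} w_1(x,s)}{t^{\eta_p(x,t)}}\,\|f\|_{M_{p(.),w_1(.)}(\Omega)}.
\end{equation*}
Substituting into the local estimate, taking $\sup_{t>r}$, and applying the hypothesis of the theorem yields $\|Mf\|_{L_{p(.)}(\tilde{B}(x,r))} \leq C\,w_2(x,r)\,\|f\|_{M_{p(.),w_1(.)}(\Omega)}$; dividing by $w_2(x,r)$ and taking supremum over $x$ and $r$ closes the argument.

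The main obstacle I expect is the local estimate, specifically the two-sided bound $\|\chi_{\tilde{B}(x,t)}\|_{L_{p(.)}} \asymp t^{\eta_p(x,t)}$ uniformly in $x$ and $t>0$. This is where the log-condition on bounded scales and the decay condition at infinity encoded in $\mathbb{P}_\infty^{\log}(\Omega)$ are essential, and it also explains the piecewise definition of $\eta_p(x,r)$ as $n/p(x)$ for $r\leq 1$ and $n/p(\infty)$ for $r>1$: it is precisely this split that makes the characteristic-function estimate valid on both small and large scales. Once this estimate and the $L_{p(.)}$-boundedness of $M$ are in place, the remaining computation is a direct adaptation of the Guliyev argument from the constant-exponent case.
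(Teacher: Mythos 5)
Your proof is essentially correct, but there is nothing in the paper to compare it against: this theorem is stated in the Preliminaries as a known result imported from the literature (it is the Guliyev--Samko maximal-operator theorem for generalized variable-exponent Morrey spaces; cf.~\cite{Almeida1}, \cite{Guliyev2}, \cite{Guliyev}), and the paper gives no proof of it. Your reconstruction follows the standard two-step scheme, and your Step~1 is in fact precisely the local estimate that the paper does quote separately later on (the inequality $\|Mf\|_{L_{p(.)}(\tilde B(x,t))}\leq C\,t^{\eta_p(x,t)}\sup_{r>2t}r^{-\eta_p(x,r)}\|f\|_{L_{p(.)}(\tilde B(x,r))}$ attributed to \cite{Guliyev}), so you are re-deriving a quoted lemma rather than diverging from the paper. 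Two small points to watch. First, as you note, the crux of Step~1 is the two-sided bound $\|\chi_{\tilde B(x,t)}\|_{L_{p(.)}}\asymp t^{\eta_p(x,t)}$ uniformly in $t>0$, which genuinely needs both the local log-condition and the decay condition built into $\mathbb{P}^{\log}_{\infty}(\Omega)$; this should be cited or proved rather than asserted. Second, the paper records two mutually inconsistent definitions of $\|\cdot\|_{M_{p(.),w(.)}(\Omega)}$ (with and without the factor $r^{-\eta_p(x,r)}$); your Step~2 chain $\|f\|_{L_{p(.)}(\tilde B(x,t))}\leq w_1(x,s)\|f\|_{M_{p(.),w_1(.)}}$ is the one consistent with the unbounded-set definition, which is also the only reading under which the hypothesis $\sup_{t>r}t^{-\eta_p(x,t)}\operatorname{essinf}_{s>t}w_1(x,s)\leq C\,w_2(x,r)r^{-\eta_p(x,r)}$ plugs in cleanly, so you have made the right choice, but it is worth stating explicitly which norm you are using.
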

\begin{theorem}[Spanne type result with $\alpha=const$]\label{th2.2}
Let $p\in{\mathbb{P}}_{\infty}^{log}(\Omega)$, $\alpha$ and $p(.),q(.)$ satisfy $0<\alpha<n$, $\frac{1}{q(x)}=\frac{1}{p(x)}-\frac{\alpha}{n}$ , the functions ${w}_{1}$ and ${w}_{2}$ satisfy the condition
\begin{equation*}
\int_{r}^{\infty}\frac{\mathop{essinf}_{t\leq{s}<\infty}{w}_{1}(x,s)}{{t}^{1+{\eta}_{p}(x,t)}}dt\leq{C}\frac{{w}_{2}(x,r)}{{r}^{{\eta}_{q}(x,r)}}
\end{equation*}
where $C$ is independent of $x$ and $r$. Then the operators ${M}_{\alpha}$ and ${I}_{\alpha}$ from ${M}_{p(.),{w}_{1}(.)}$ to ${M}_{q(.),{w}_{2}(.)}$ are bounded.
\end{theorem}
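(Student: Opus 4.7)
The plan is to follow the now-standard Burenkov--Guliyev approach, reducing the theorem to a local pointwise estimate of Morrey type and then applying the integral hypothesis on the weights $w_1, w_2$. Since $M_\alpha f(x) \le C\, I_\alpha |f|(x)$ pointwise, it suffices to treat the Riesz potential $I_\alpha$.

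\emph{Step 1 (Local estimate).} The core of the argument is to establish, for every $x \in \Omega$ and $r > 0$, an inequality of the form
\begin{equation*}
\|I_\alpha f\|_{L^{q(.)}(\tilde B(x,r))} \;\le\; C\, r^{\eta_q(x,r)} \int_{r}^{\infty} \frac{\|f\|_{L^{p(.)}(\tilde B(x,t))}}{t^{1+\eta_p(x,t)}}\, dt,
\end{equation*}
where $C$ is independent of $x, r, f$. To do this, fix $x, r$ and split $f = f_1 + f_2$ with $f_1 = f\, \chi_{\tilde B(x,2r)}$ and $f_2 = f - f_1$. For the local part, invoke the known $L^{p(.)} \to L^{q(.)}$ boundedness of $I_\alpha$ under the log-H\"older and decay conditions on $p$ (Diening, Cruz-Uribe, Samko), which yields $\|I_\alpha f_1\|_{L^{q(.)}(\tilde B(x,r))} \le C\, \|f\|_{L^{p(.)}(\tilde B(x,2r))}$. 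I then rewrite the right-hand side as a dyadic tail integral on $[r, 2r]$ by exploiting the trivial bound $\|f\|_{L^{p(.)}(\tilde B(x,2r))} \le \|f\|_{L^{p(.)}(\tilde B(x,t))}$ for $t \ge 2r$, together with the elementary identity $\int_r^{2r} t^{-1-\eta_p(x,t)} dt \approx r^{-\eta_p(x,r)}$ and the relation $\eta_p - \eta_q = \alpha$ coming from $1/q(x) = 1/p(x) - \alpha/n$.

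\emph{Step 2 (Tail estimate).} For the far part $f_2$ and $y \in \tilde B(x,r)$, the annular decomposition gives
\begin{equation*}
|I_\alpha f_2(y)| \;\lesssim\; \int_{2r}^{\infty} t^{\alpha - n - 1} \int_{\tilde B(x,t)} |f(z)|\, dz\, dt.
\end{equation*}
Apply the variable-exponent H\"older inequality to the inner integral using $\|\chi_{\tilde B(x,t)}\|_{L^{p'(.)}} \approx t^{\,n - \eta_p(x,t)}$, which is a standard consequence of the log-H\"older and decay conditions on $p$. The resulting bound on $|I_\alpha f_2(y)|$ is independent of $y$, so multiplying by $\|\chi_{\tilde B(x,r)}\|_{L^{q(.)}} \approx r^{\eta_q(x,r)}$ produces exactly the local estimate in Step 1.

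\emph{Step 3 (Applying the weight condition).} To control the integral on the right of the local estimate, use the Morrey norm in the form $\|f\|_{L^{p(.)}(\tilde B(x,t))} \le w_1(x,s)\, \|f\|_{M_{p(.),w_1(.)}}$ for every $s \ge t$ (the monotonicity $\|f\|_{L^{p(.)}(\tilde B(x,t))} \le \|f\|_{L^{p(.)}(\tilde B(x,s))}$ is what allows passage from $w_1(x,t)$ to $\operatorname*{essinf}_{s \ge t} w_1(x,s)$). Substituting and invoking the hypothesis
\begin{equation*}
\int_r^\infty \frac{\operatorname*{essinf}_{t \le s < \infty} w_1(x,s)}{t^{1+\eta_p(x,t)}}\, dt \;\le\; C\, \frac{w_2(x,r)}{r^{\eta_q(x,r)}},
\end{equation*}
gives $\|I_\alpha f\|_{L^{q(.)}(\tilde B(x,r))} \le C\, w_2(x,r)\, \|f\|_{M_{p(.),w_1(.)}}$. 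Dividing by $w_2(x,r)$ and taking the supremum over $x, r$ yields the claim.

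\emph{Main obstacle.} The technical difficulty is the local estimate of Step 1: the transition at $t = 1$ in $\eta_p(x,t)$ must be handled carefully, and the appeal to the $L^{p(.)} \to L^{q(.)}$ boundedness of $I_\alpha$ on the unbounded set $\Omega$ requires both the log-H\"older regularity and the decay condition at infinity, i.e.\ $p \in \mathbb{P}^{\log}_\infty(\Omega)$. The pointwise tail estimate and the weighted integration in Step 3 are then largely mechanical.
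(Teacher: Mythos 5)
Your architecture is the standard Burenkov--Guliyev one and it matches what this paper actually does for the analogous statements: the paper does not prove Theorem~2.2 at all (it is quoted as a preliminary from the Almeida--Hasanov--Samko and Guliyev--Samko papers), but its proof of the variable-order analogue (the theorem establishing \eqref{eq2.9}) uses exactly your decomposition $f=f_1+f_2$ at radius $2r$, the variable-exponent Sobolev theorem for $f_1$, and the H\"older/Fubini tail estimate for $f_2$, followed by the $\mathop{essinf}$ monotonicity trick. So the plan is the right one.

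The genuine gap is the local estimate announced in Step 1: it carries the wrong exponent and is false as stated. Your own Step 2 computation produces the integrand $t^{\alpha-n-1}\cdot t^{\,n-\eta_p(x,t)}=t^{\alpha-1-\eta_p(x,t)}=t^{-1-\eta_q(x,t)}$ (using $\eta_p-\eta_q=\alpha$), i.e.\ exactly the paper's inequality \eqref{eq2.8}, not the claimed $t^{-1-\eta_p(x,t)}$; so Step 2 does not deliver the inequality of Step 1. The local part fails too: the dyadic rewrite gives $\|f\|_{{L}_{p(.)}(\tilde{B}(x,2r))}\leq C\,r^{\eta_p(x,r)}\int_r^\infty t^{-1-\eta_p(x,t)}\|f\|_{{L}_{p(.)}(\tilde{B}(x,t))}\,dt$ with prefactor $r^{\eta_p(x,r)}=r^{\alpha}r^{\eta_q(x,r)}$, and $r^{\alpha}\leq 1$ only for $r\leq 1$. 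For $f=\chi_{B(x,2r)}$ with $r$ large, the left-hand side of your claimed estimate is of order $r^{\eta_p(x,r)}$ while the right-hand side is of order $r^{\eta_q(x,r)}$, so the estimate cannot hold for $r>1$; this is precisely the "transition at $t=1$" issue you flag as the main obstacle but do not resolve. Once you replace Step 1 by the correct estimate \eqref{eq2.8}, the weight condition actually needed in Step 3 (with the unbounded-set norm $\sup_{x,r}\|f\|_{{L}_{p(.)}(\tilde{B}(x,r))}/w_1(x,r)$) is $\int_r^\infty t^{-1-\eta_q(x,t)}\mathop{essinf}_{s\geq t}w_1(x,s)\,dt\leq C\,w_2(x,r)r^{-\eta_q(x,r)}$, and this is not implied by the hypothesis as displayed, since the two integrands differ by the unbounded factor $t^{\alpha}$. (The hypothesis as printed appears to be carried over from the bounded-set setting, where the Morrey norm has an extra power normalization; but taking the statement at face value, your argument does not close, and reverse-engineering a stronger local estimate to fit it does not work because that estimate is false.)
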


The next theorem was proved in~{\cite{Kokilashvili}}.
\begin{theorem}\label{th2.3} Suppose that $p\in{\mathbb{P}}_{\infty}^{log}(\Omega)$, $\alpha\in{\mathbb{A}}^{log}(\Omega)$ and ${\alpha}_{-}=\mathop{inf}_{x\in\Omega}\alpha(x)>0$, ${(\alpha{p})}_{+}=\mathop{sup}_{x\in\Omega}\alpha(x)p(x)<n$. Then
\begin{equation*}
{\|\frac{1}{{(1+|x|)}^{\gamma(x)}}{I}^{\alpha(.)}f\|}_{{L}_{q(.)}({R}^{n})}\leq{C}{\|f\|}_{{L}_{p(.)}({R}^{n})},
\end{equation*}
where
$$
\frac{1}{q(x)}=\frac{1}{p(x)}-\frac{\alpha(x)}{n},
\gamma(x)={A}_{\infty}\alpha(x)[1-\frac{\alpha(x)}{n}]\leq\frac{n}{4}{A}_{\infty}
$$
with ${A}_{\infty}$ comes from~{\eqref{2.1}}.
\end{theorem}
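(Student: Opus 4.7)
The plan is to split the Riesz potential of variable order into a local piece and a non-local piece, treat each by distinct techniques, and recombine via a Hedberg-type interpolation. Write $I^{\alpha(\cdot)}f(x)=I_1 f(x)+I_2 f(x)$ with
\begin{equation*}
I_1 f(x)=\int_{|x-y|<1}\frac{f(y)\,dy}{|x-y|^{n-\alpha(x)}},\qquad I_2 f(x)=\int_{|x-y|\geq 1}\frac{f(y)\,dy}{|x-y|^{n-\alpha(x)}}.
\end{equation*}

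For the local piece I would first use the log-H\"older continuity of $\alpha(\cdot)$ to absorb $|x-y|^{\alpha(x)-\alpha(y)}$ into a uniform constant on $\{|x-y|<1\}$, reducing $I_1$ to essentially the classical Riesz potential with kernel $|x-y|^{-n+\alpha(y)}$. Then a Hedberg-type pointwise inequality of the shape
\begin{equation*}
|I_1 f(x)|\leq C\,[Mf(x)]^{1-\alpha(x)p(x)/n}\,\|f\|_{L^{p(\cdot)}(\mathbb{R}^n)}^{\alpha(x)p(x)/n}
\end{equation*}
is produced by dyadic decomposition into annuli and the variable exponent H\"older inequality, the condition $(\alpha p)_+<n$ ensuring geometric convergence. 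Since the weight $(1+|x|)^{-\gamma(x)}$ is bounded, the contribution from $I_1$ ultimately reduces to the boundedness of $M$ in $L^{p(\cdot)}(\mathbb{R}^n)$, which is available under $p\in\mathbb{P}_\infty^{\log}$.

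For the non-local piece I would apply the variable exponent H\"older inequality directly to obtain
\begin{equation*}
|I_2 f(x)|\leq C\,\|f\|_{L^{p(\cdot)}(\mathbb{R}^n)}\,\bigl\|\chi_{\{|y-x|\geq 1\}}\,|y-x|^{-(n-\alpha(x))}\bigr\|_{L^{p'(\cdot)}(\mathbb{R}^n)}.
\end{equation*}
The kernel norm is then evaluated by decomposing into dyadic shells $\{2^k\leq|y-x|<2^{k+1}\}$ and, on each shell, invoking the decay condition $|p(y)-p(\infty)|\leq A_\infty/\ln(2+|y|)$ to replace the variable $p'(y)$ by $p'(\infty)$ at the price of a factor that, when summed in $k$, grows like $(1+|x|)^{\gamma(x)}$. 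The exponent $\gamma(x)=A_\infty\alpha(x)(1-\alpha(x)/n)$ emerges naturally from the logarithm--exponential cancellation between the decay condition and the shift from $p(x)$ to $q(x)$; its universal bound $\tfrac{n}{4}A_\infty$ is simply the maximum of the concave map $\alpha\mapsto\alpha(1-\alpha/n)$ on $[0,n]$.

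The main obstacle is the sharp tracking of constants in this second step, since the value of $\gamma(x)$ is delicate: too large a $\gamma$ would destroy the integrability in the final $L^{q(\cdot)}$ norm, too small would fail to compensate the growth of the kernel. Once the weighted pointwise bound
\begin{equation*}
(1+|x|)^{-\gamma(x)}|I^{\alpha(\cdot)}f(x)|\leq C\,[Mf(x)]^{1-\alpha(x)p(x)/n}\,\|f\|_{L^{p(\cdot)}}^{\alpha(x)p(x)/n}
\end{equation*}
is established, normalising $\|f\|_{L^{p(\cdot)}}=1$, raising both sides to the $q(x)$-th power, integrating, and invoking the $L^{p(\cdot)}$-boundedness of $M$ closes the argument.
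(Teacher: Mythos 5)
Two remarks before the substance: the paper itself does not prove this theorem --- it is quoted from~\cite{Kokilashvili} --- so I am comparing your sketch against the argument of that source, which is indeed a Hedberg-type pointwise estimate combined with the boundedness of $M$ on $L_{p(\cdot)}(\mathbb{R}^n)$. Your overall architecture (pointwise bound by $[Mf(x)]^{1-\alpha(x)p(x)/n}\|f\|_{p(\cdot)}^{\alpha(x)p(x)/n}$, normalise $\|f\|_{p(\cdot)}=1$, raise to the power $q(x)$ using $q(x)\bigl(1-\tfrac{\alpha(x)p(x)}{n}\bigr)=p(x)$, integrate, invoke the maximal theorem) is the right one, and your reading of where $\gamma(x)$ and the bound $\tfrac{n}{4}A_\infty$ come from is correct.

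The gap is in the non-local piece. You split at the \emph{fixed} radius $1$, and your H\"older estimate for $I_2$ gives $|I_2f(x)|\le C(1+|x|)^{\gamma(x)}\|f\|_{p(\cdot)}$, hence $(1+|x|)^{-\gamma(x)}|I_2f(x)|\le C\|f\|_{p(\cdot)}$. This is only an $L_\infty$ bound: a nonzero constant does not belong to $L_{q(\cdot)}(\mathbb{R}^n)$, and the bound $C\|f\|_{p(\cdot)}$ cannot be dominated pointwise by $C[Mf(x)]^{1-\alpha(x)p(x)/n}\|f\|_{p(\cdot)}^{\alpha(x)p(x)/n}$, since $Mf(x)$ may be arbitrarily small relative to $\|f\|_{p(\cdot)}$ (take $f$ supported in a fixed ball and let $|x|\to\infty$). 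So the ``weighted pointwise bound'' you invoke at the end does not follow from your two pieces, and the $I_2$ contribution is not controlled in $L_{q(\cdot)}$. The repair --- and what the cited proof actually does --- is to keep the splitting radius $r=r(x)$ free in the tail as well: one proves
\begin{equation*}
\bigl\|\,|x-\cdot|^{\alpha(x)-n}\bigr\|_{L_{p'(\cdot)}(\{|x-y|>r\})}\le C\,(1+|x|)^{\gamma_0(x)}\,r^{\alpha(x)-n/p(x)},
\end{equation*}
the weight entering exactly through the decay condition on the far dyadic shells, so that the tail term is $C(1+|x|)^{\gamma_0(x)}r^{\alpha(x)-n/p(x)}\|f\|_{p(\cdot)}$. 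Optimising $r$ against the near-part bound $Cr^{\alpha(x)}Mf(x)$ then yields $C(1+|x|)^{\gamma_0(x)\alpha(x)p(x)/n}[Mf(x)]^{1-\alpha(x)p(x)/n}\|f\|_{p(\cdot)}^{\alpha(x)p(x)/n}$, and the prefactor is exactly what the weight $(1+|x|)^{-\gamma(x)}$ is there to absorb. Without the factor $r^{\alpha(x)-n/p(x)}$ in the tail there is nothing to optimise, and the fixed-radius decomposition cannot yield the global $L_{q(\cdot)}$ estimate.
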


The following results were obtained in~{\cite{Guliyev}}.

\begin{theorem} Suppose that $p\in{\mathcal{P}}^{\log}(\Omega)$ satisfies~{\eqref{2.1}}, and let
$1<{\theta}_{1}^{-}\leq{\theta}_{1}(t)\leq{\theta}_{1}^{+}<\infty$, $0<t<l$,
$1<{\theta}_{2}^{-}\leq{\theta}_{2}(t)\leq{\theta}_{2}^{+}<\infty$, $0<t<l$.
Assume that there exists $\delta>0$ such that ${\theta}_{1}(t)\leq{\theta}_{2}(t)$, $t\in(0,\delta)$,
$({\theta}_{1},{w}_{1})\in\mathcal{W}(\delta,l)$.
If
\begin{equation*}
\mathop{sup}_{x\in\Omega,0<t<\delta}\int_{0}^{t}{({w}_{2}(x,\xi))}^{{\theta}_{2}(\xi)}{(\int_{t}^{\delta}{(\frac{1}{r{w}_{1}(x,r)})}^{{[\tilde{\theta}_{1}(\xi)]}^{'}}dr)}^{\frac{{\theta}_{2}(\xi)}{{[\tilde{\theta}_{1}(\xi)]}^{'}}}d\xi<\infty,
\end{equation*}
then the maximal operator $M$ from ${M}_{p(.),{\theta}_{1}(.),{w}_{1}(.)}(\Omega)$ to ${M}_{p(.),{\theta}_{2}(.),{w}_{2}(.)}(\Omega)$ is bounded.
\end{theorem}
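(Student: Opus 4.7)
The plan is to reduce the boundedness of $M$ between two global Morrey-type spaces to a weighted Hardy-type inequality on $L_{\theta_2(.)}(0,\infty)$. First, I would establish a local estimate of the form
\[
\|Mf\|_{L_{p(.)}(\tilde B(x,r))} \leq C\, r^{\eta_p(x,r)} \int_{r}^{\infty} \frac{\|f\|_{L_{p(.)}(\tilde B(x,t))}}{t^{\eta_p(x,t)+1}}\, dt,
\]
uniformly in $x\in\Omega$ and $r>0$. The standard route is a splitting $f = f_1 + f_2$ with $f_1 = f\chi_{\tilde B(x,2r)}$. For $Mf_1$ one applies the $L_{p(.)}$-boundedness of $M$, available since $p\in\mathcal P^{\log}(\Omega)$, and then controls $\|f\|_{L_{p(.)}(\tilde B(x,2r))}$ by the integral from $r$ to $\infty$ via a standard telescoping (dyadic) argument. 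For $Mf_2$ the pointwise bound $Mf_2(y) \lesssim \int_{r}^{\infty} t^{-n-1}\int_{\tilde B(x,t)}|f(z)|\,dz\,dt$, combined with the generalized Holder inequality for $L_{p(.)}$ applied to $\int_{\tilde B(x,t)}|f|$, produces the same integral form.

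Second, substituting this local estimate into the definition of the $M_{p(.),\theta_2(.),w_2(.)}$ norm reduces the problem to a two-weight Hardy-type estimate: for each $x\in\Omega$,
\[
\Big\| w_2(x,r) \int_{r}^{\infty} \frac{\varphi_x(t)}{t}\, dt \Big\|_{L_{\theta_2(.)}(0,\infty)} \leq C\, \big\| w_1(x,r)\, \varphi_x(r) \big\|_{L_{\theta_1(.)}(0,\infty)},
\]
where $\varphi_x(t) = t^{-\eta_p(x,t)} \|f\|_{L_{p(.)}(\tilde B(x,t))}$. The hypothesis on the double integral in the theorem is precisely the Muckenhoupt/Sawyer-type characterization of the boundedness of the Hardy operator $g\mapsto\int_{r}^{\infty} g(t)\,t^{-1}\,dt$ from the weighted variable-exponent space with weight $w_1$ and exponent $\theta_1(.)$ into the one with weight $w_2$ and exponent $\theta_2(.)$; the assumption $(\theta_1,w_1)\in\mathcal W(\delta,l)$ is exactly what is needed to make that characterization applicable. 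Taking the supremum over $x\in\Omega$ then yields the desired inequality between the two global Morrey norms.

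The main obstacle is the variable-exponent two-weight Hardy inequality used in the second step: one must verify that the stated supremum condition implies the required Hardy estimate with variable parameters $\theta_1(t)$ and $\theta_2(t)$. This relies on the ordering $\theta_1(t)\le\theta_2(t)$ near zero together with the class $\mathcal{W}(\delta,l)$, which encodes the regularity of $w_1$ (monotonicity/quasi-concavity type) required to reduce the two-weight problem to a testable Sawyer-type condition; the interval $(\delta,l)$ contribution is then handled by elementary means using the finiteness of the $L_{\theta(.)}$-norm of the weight. By contrast, the first-step local estimate is essentially routine once the log-Holder boundedness of $M$ on $L_{p(.)}$ and the variable-exponent Holder inequality are in hand, though some extra bookkeeping is needed for $t>1$ because $\eta_p(x,t)$ switches from $n/p(x)$ to $n/p(\infty)$ there.
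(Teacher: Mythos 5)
The paper does not actually prove this statement itself --- it is quoted from Guliyev and Samko \cite{Guliyev} as a preliminary --- but your plan coincides with the strategy the paper uses for its own analogous result, Theorem~\ref{th3.1}: first the local estimate \eqref{eq2.7} bounding ${\|Mf\|}_{{L}_{p(.)}(\tilde{B}(x,r))}$ by ${r}^{{\eta}_{p}(x,r)}\int_{r}^{\infty}{t}^{-{\eta}_{p}(x,t)-1}{\|f\|}_{{L}_{p(.)}(\tilde{B}(x,t))}dt$ (obtained there from Theorem~\ref{th2.4} and Lemma~\ref{lem2.1}, i.e.\ exactly the $f={f}_{1}+{f}_{2}$ splitting you describe), and then the boundedness of the dual Hardy operator ${\tilde{H}}_{v,u}$ between variable-exponent Lebesgue spaces on the half-line (Theorem~\ref{th2.8}, due to Edmunds--Kokilashvili--Meskhi). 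Your outline is sound and follows essentially the same route; the only caveat is that the displayed integral condition is used as a sufficient condition for the variable-exponent Hardy inequality, not as a Sawyer-type characterization.
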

\begin{theorem} Suppose that $p,\alpha\in{\mathcal{P}}^{\log}(\Omega)$ satisfy~{\eqref{2.1}}, and  let $\alpha>0$, ${(\alpha p(.))}_{+}=\mathop{sup}_{x\in\Omega}\alpha p(x)<n$, $\frac{1}{{p}_{2}(x)}=\frac{1}{{p}_{1}(x)}-\frac{\alpha}{n}$
$1<{\theta}_{1}^{-}\leq{\theta}_{1}(t)\leq{\theta}_{1}^{+}<\infty$, $0<t<l$,
$1<{\theta}_{2}^{-}\leq{\theta}_{2}(t)\leq{\theta}_{2}^{+}<\infty$, $0<t<l$.
Assume that there exists $\delta>0$ such that ${\theta}_{1}(t)\leq{\theta}_{2}(t)$, $t\in(0,\delta)$,
$({\theta}_{1},{w}_{1})\in\mathcal{W}(\delta,l)$.
If 
\begin{equation*}
\mathop{sup}_{x\in\Omega,0<t<\delta}\int_{0}^{t}{({w}_{2}(x,\xi))}^{{\theta}_{2}(\xi)}{(\int_{t}^{\delta}{(\frac{{r}^{\alpha(x)-1}}{{w}_{1}(x,r)})}^{{[\tilde{\theta}_{1}(\xi)]}^{'}}dr)}^{\frac{{\theta}_{2}(\xi)}{{[\tilde{\theta}_{1}(\xi)]}^{'}}}d\xi<\infty,
\end{equation*}
then the operators ${I}_{\alpha}$ и ${M}_{\alpha}$ from ${M}_{{p}_{1}(.),{\theta}_{1}(.),{w}_{1}(.)}(\Omega)$ to ${M}_{{p}_{2}(.),{\theta}_{2}(.),{w}_{2}(.)}(\Omega)$ are bounded.
\end{theorem}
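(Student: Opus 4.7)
The plan is to reduce the boundedness of $I_\alpha$ and $M_\alpha$ on the global Morrey-type spaces with variable exponents to an integral Hardy-type inequality in $L_{\theta_2(\cdot)}(0,\infty)$ weighted by $w_2(x,\cdot)$, much as in the constant-exponent Burenkov--Guliyev scheme and as in the maximal-operator theorem that immediately precedes the statement. The engine will be a local-to-global estimate showing that $\|I_\alpha f\|_{L_{p_2(\cdot)}(\tilde B(x,r))}$ is controlled by a weighted integral of $\|f\|_{L_{p_1(\cdot)}(\tilde B(x,t))}$ over $t\ge r$; then one takes the $L_{\theta_2(\cdot)}$ norm in $r$ and applies the weight condition.

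First, for fixed $x\in\Omega$ and $r>0$ I would split $f=f_1+f_2$ with $f_1=f\chi_{\tilde B(x,2r)}$ and $f_2=f\chi_{\Omega\setminus\tilde B(x,2r)}$. For the local piece, Theorem~\ref{th2.3} of Kokilashvili (in its $\mathcal{P}^{\log}_\infty$ form, applied to $f_1$ extended by zero) yields
\[
\|I_\alpha f_1\|_{L_{p_2(\cdot)}(\tilde B(x,r))}\le C\,\|f\|_{L_{p_1(\cdot)}(\tilde B(x,2r))},
\]
since the weight $(1+|y|)^{-\gamma(y)}$ is bounded below on any fixed ball. To convert this into an integrated form, I would rewrite
\[
\|f\|_{L_{p_1(\cdot)}(\tilde B(x,2r))}\le C\, r^{\alpha(x)}\int_{2r}^{\infty}\frac{\|f\|_{L_{p_1(\cdot)}(\tilde B(x,t))}}{t^{1+\alpha(x)}}\,dt\cdot(\text{factor})
\]
via the identity $r^{\alpha}=\alpha\int_{2r}^\infty t^{-1-\alpha}r^\alpha\,dt$ combined with monotonicity of $t\mapsto\|f\|_{L_{p_1(\cdot)}(\tilde B(x,t))}$. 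For the non-local piece, pointwise for $y\in\tilde B(x,r)$ one decomposes $\Omega\setminus\tilde B(x,2r)$ into dyadic annuli $A_k=\tilde B(x,2^{k+1}r)\setminus\tilde B(x,2^k r)$ and estimates $|x-z|^{\alpha(x)-n}$ by $(2^k r)^{\alpha(x)-n}$ on $A_k$, then applies Hölder's inequality in $L_{p_1(\cdot)},L_{p_1'(\cdot)}$ on each annulus; summing the resulting geometric series and rewriting as an integral gives
\[
|I_\alpha f_2(y)|\le C\int_{2r}^{\infty}t^{\alpha(x)-1}\,\frac{\|f\|_{L_{p_1(\cdot)}(\tilde B(x,t))}}{t^{\eta_{p_1}(x,t)}}\,dt,
\]
uniformly in $y\in\tilde B(x,r)$. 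Multiplying by $\|1\|_{L_{p_2(\cdot)}(\tilde B(x,r))}\asymp r^{\eta_{p_2}(x,r)}$ and invoking $\frac{1}{p_2}=\frac{1}{p_1}-\frac{\alpha}{n}$ produces the desired local-to-global inequality
\[
r^{-\eta_{p_2}(x,r)}\|I_\alpha f\|_{L_{p_2(\cdot)}(\tilde B(x,r))}\le C\int_{r}^{\infty}\frac{t^{\alpha(x)-1}}{t^{\eta_{p_1}(x,t)}}\,\|f\|_{L_{p_1(\cdot)}(\tilde B(x,t))}\,dt.
\]

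At this stage I would multiply by $w_2(x,r)$ and take the $L_{\theta_2(\cdot)}(0,\infty)$ norm in $r$, obtaining
\[
\|I_\alpha f\|_{GM_{p_2(\cdot),\theta_2(\cdot),w_2(\cdot)}}\le C\sup_{x\in\Omega}\Bigl\|w_2(x,r)\!\int_r^\infty\! t^{\alpha(x)-1-\eta_{p_1}(x,t)}\|f\|_{L_{p_1(\cdot)}(\tilde B(x,t))}\,dt\Bigr\|_{L_{\theta_2(\cdot)}(0,\infty)}.
\]
The final step is an application of a weighted Hardy-type inequality in variable-exponent Lebesgue spaces on $(0,\infty)$: since $(\theta_1,w_1)\in\mathcal{W}(\delta,l)$ and the hypothesized supremum on $w_1,w_2$ (with $r^{\alpha(x)-1}/w_1$ playing the role that $1/(rw_1)$ played in the preceding maximal-operator theorem) is finite, the Hardy operator $g\mapsto \int_r^\infty g(t)\,dt$ is bounded from $L_{\tilde\theta_1(\cdot)}(0,\delta)$ with weight $1/w_1(x,\cdot)$ into $L_{\theta_2(\cdot)}(0,\delta)$ with weight $w_2(x,\cdot)$. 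Applying this with $g(t)=t^{\alpha(x)-1-\eta_{p_1}(x,t)}w_1(x,t)\cdot\|f\|_{L_{p_1(\cdot)}(\tilde B(x,t))}/w_1(x,t)$ yields the bound by $\|f\|_{GM_{p_1(\cdot),\theta_1(\cdot),w_1(\cdot)}}$. The same argument works verbatim for the fractional maximal operator $M_\alpha$, since $M_\alpha f(x)\le C\,I_\alpha|f|(x)$ pointwise.

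The main obstacle I expect is the careful justification of the weighted Hardy inequality in $L_{\theta(\cdot)}$ with variable exponent $\theta$: the proof of the analogous step for $M$ in the cited Guliyev theorem uses the class $\mathcal{W}(\delta,l)$, and one must verify that the substitution $\alpha(x)\ne 0$ only alters the weight by the factor $t^{\alpha(x)}$ already built into the integral condition. A secondary technical point is handling the region $t>\delta$ (or $r>l$), which in the setting of unbounded $\Omega$ requires the decay of $w_1$ encoded in the assumption $\sup_{x\in\Omega}\|w(x,\cdot)\|_{L_{\theta(\cdot)}(0,\infty)}<\infty$.
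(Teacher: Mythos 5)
Your proposal follows essentially the same route as the paper: the splitting $f=f_{1}+f_{2}$ with the variable-exponent Sobolev theorem on the local piece and a H\"older/Fubini estimate on the far piece to obtain the local-to-global inequality of Theorem~\ref{th2.6}, followed by recognition of the dual Hardy operator ${\tilde{H}}_{v,u}$ with $u(t)={t}^{\alpha-1}/{w}_{1}(x,t)$ and an appeal to the Edmunds--Kokilashvili--Meskhi weighted Hardy inequality (Theorem~\ref{th2.8}), exactly the scheme used in the paper's proof of Theorem~\ref{th3.2}. One small caution: for constant $\alpha$ you should invoke the unweighted Sobolev estimate ${\|{I}^{\alpha}{f}_{1}\|}_{{L}_{q(.)}}\leq C{\|{f}_{1}\|}_{{L}_{p(.)}}$ directly rather than dividing out the weight ${(1+|y|)}^{-\gamma(y)}$ from Theorem~\ref{th2.3}, since that weight is not bounded below uniformly in $x$ when $\Omega$ is unbounded.
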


In~{\cite{Guliyev}} it was proved that, if $\alpha(x)$ is a variable, under the conditions of Theorem~{\ref{th2.2}} the operators
$\frac{1}{{(1+|x|)}^{\gamma(x)}}{M}^{\alpha(.)}$ and $\frac{1}{{(1+|x|)}^{\gamma(x)}}{I}^{\alpha(.)}$ from ${M}_{p(.),{w}_{1}(.)}$ to ${M}_{q(.),{w}_{2}(.)}$ are bounded, where $\gamma(x)={A}_{\infty}\alpha(x)[1-\frac{\alpha(x)}{n}]\leq\frac{n}{4}{A}_{\infty}$.

The next lemma was proved in~{\cite{Guliyev}}.

\begin{lemma}\label{lem2.1}
Assume that $p\in{\mathbb{P}}_{\infty}^{log}(\Omega)$ and $f\in{L}_{loc}^{p(.)}({\mathbb R}^{n})$.Then
$$
{\|f\|}_{{L}_{p(.)}(B(x,t))}\leq{C}{t}^{{\eta}_{p}(x,t)}\int_{t}^{\infty}{r}^{-{\eta}_{p}(x,r)-1}{\|f\|}_{{L}_{p(.)}(B(x,r))}dr.
$$
\end{lemma}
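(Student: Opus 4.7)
The plan rests on a single elementary observation: the map $r \mapsto \|f\|_{L_{p(\cdot)}(B(x,r))}$ is non-decreasing in $r$, because $B(x,t) \subset B(x,r)$ whenever $r \geq t$ and the Luxemburg norm is monotone with respect to the domain of integration. Consequently, for every $r \geq t$ one has $\|f\|_{L_{p(\cdot)}(B(x,r))} \geq \|f\|_{L_{p(\cdot)}(B(x,t))}$, and I would begin by pulling this factor out of the integral on the right-hand side to obtain
\[
\int_{t}^{\infty} r^{-\eta_{p}(x,r)-1}\,\|f\|_{L_{p(\cdot)}(B(x,r))}\,dr \;\geq\; \|f\|_{L_{p(\cdot)}(B(x,t))}\int_{t}^{\infty} r^{-\eta_{p}(x,r)-1}\,dr.
\]
The whole lemma then reduces to the pointwise estimate
\[
\int_{t}^{\infty} r^{-\eta_{p}(x,r)-1}\,dr \;\geq\; c\, t^{-\eta_{p}(x,t)},
\]
where $c$ depends only on $n$, $p_{-}$, $p_{+}$.

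To establish this, I would split according to the piecewise definition of $\eta_{p}(x,r)$. If $t \geq 1$, the integrand is simply $r^{-n/p(\infty)-1}$ throughout $(t,\infty)$ and evaluates to $\frac{p(\infty)}{n}\,t^{-n/p(\infty)} = \frac{p(\infty)}{n}\,t^{-\eta_{p}(x,t)}$, so $c = p_{-}/n$ works. If $0 < t < 1$, split the integral at $r=1$: the first piece equals $\frac{p(x)}{n}\bigl(t^{-n/p(x)} - 1\bigr)$, and the second piece contributes $\frac{p(\infty)}{n}$.

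The only place that requires care is confirming $t^{-n/p(x)} - 1 \gtrsim t^{-n/p(x)}$ uniformly. I would handle this by a sub-dichotomy: for $t \leq 1/2$ the inequality $t^{-n/p(x)} \geq 2^{n/p_{-}} > 1$ gives $t^{-n/p(x)} - 1 \geq (1 - 2^{-n/p_{-}})\,t^{-n/p(x)}$; for $t \in [1/2,1)$ the quantity $t^{-\eta_{p}(x,t)} = t^{-n/p(x)}$ is bounded above by $2^{n/p_{+}}$, and the tail contribution $\frac{p(\infty)}{n} \geq \frac{p_{-}}{n}$ alone already dominates $c\,t^{-\eta_{p}(x,t)}$ for a suitable $c$. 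Combining both regimes yields a single constant $c = c(n,p_{-},p_{+})$, which upon inversion gives the asserted $C$.

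The main obstacle, such as it is, is purely bookkeeping in the small-$t$ window near $r=1$ where $\eta_{p}$ has its jump; no use of the log-condition on $p$ or the decay condition at infinity is needed, only the uniform bounds $1 < p_{-} \leq p(x) \leq p_{+} < \infty$. I expect no analytic subtlety beyond this.
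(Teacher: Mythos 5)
The paper offers no proof of this lemma at all --- it is simply quoted from the reference [Guliyev--Samko], so there is no in-paper argument to diverge from; your proof is the natural (and, in the cited source, essentially the standard) one: monotonicity of $r\mapsto\|f\|_{L_{p(\cdot)}(B(x,r))}$ reduces everything to the elementary bound $\int_{t}^{\infty}r^{-\eta_{p}(x,r)-1}\,dr\geq c\,t^{-\eta_{p}(x,t)}$, which your case analysis correctly establishes. The only slip is a harmless interchange of $p_{-}$ and $p_{+}$ in the sub-dichotomy: since $n/p_{+}\leq n/p(x)\leq n/p_{-}$, for $t\leq 1/2$ one gets $t^{-n/p(x)}\geq 2^{n/p(x)}\geq 2^{n/p_{+}}$ (not $2^{n/p_{-}}$), and for $t\in[1/2,1)$ one gets $t^{-n/p(x)}\leq 2^{n/p_{-}}$ (not $2^{n/p_{+}}$); this affects only the numerical value of the constant, not the validity of the argument.
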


In the same paper, the next theorem was proved.
\begin{theorem}\label{th2.4} Suppose that  $p\in{\mathbb{P}}_{\infty}^{log}(\Omega)$.Then
$$
{||Mf||}_{{L}_{p(.)}(\tilde{B}(x,t))}\leq{C}{t}^{{\eta}_{p}(x,t)}\mathop{sup}_{r>2t}{r}^{-{\eta}_{p}(x,r)}{||f||}_{{L}_{p(.)}(\tilde{B}(x,r))},
$$
for every $f\in{L}_{p(.)}(\Omega)$, where $C$ is independent of $f,x\in\Omega$ amd $t>0$.
\end{theorem}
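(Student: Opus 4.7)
The plan is to use the standard near/far splitting $f = f_{1} + f_{2}$, where $f_{1} = f \chi_{\tilde{B}(x, 2t)}$ and $f_{2} = f - f_{1}$, and to estimate $\|M f_{1}\|_{L_{p(\cdot)}(\tilde{B}(x,t))}$ and $\|M f_{2}\|_{L_{p(\cdot)}(\tilde{B}(x,t))}$ separately, then combine via the sublinearity of $M$.

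For the near part, since $p \in \mathbb{P}_{\infty}^{\log}(\Omega)$, the Hardy--Littlewood maximal operator is bounded on $L_{p(\cdot)}(\Omega)$ (by the results of Diening cited earlier in the paper), so
$\|M f_{1}\|_{L_{p(\cdot)}(\tilde{B}(x,t))} \leq C \|f\|_{L_{p(\cdot)}(\tilde{B}(x, 2t))}$.
Because $\eta_{p}(x, \cdot)$ is bounded, $(2t)^{\eta_{p}(x, 2t)} \leq C\, t^{\eta_{p}(x, t)}$, and by approximating from the right (or by the monotonicity of $r \mapsto \|f\|_{L_{p(\cdot)}(\tilde{B}(x, r))}$) I can write
$(2t)^{-\eta_{p}(x, 2t)} \|f\|_{L_{p(\cdot)}(\tilde{B}(x, 2t))} \leq \sup_{r > 2t} r^{-\eta_{p}(x, r)} \|f\|_{L_{p(\cdot)}(\tilde{B}(x, r))}$,
which delivers the desired bound for $M f_{1}$.

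For the far part, the geometric input is that for $y \in \tilde{B}(x, t)$ any ball $B(y, s)$ with $s \leq t$ lies inside $B(x, 2t)$, hence does not meet the support of $f_{2}$; for $s > t$ one has $B(y, s) \subset B(x, 2s)$. Thus pointwise $M f_{2}(y) \leq C \sup_{s > t} s^{-n} \int_{\tilde{B}(x, 2s)} |f(z)|\, dz$. Applying the generalized Hölder inequality in $L_{p(\cdot)}$ together with the characteristic-function estimate $\|\chi_{\tilde{B}(x, \rho)}\|_{L_{p'(\cdot)}(\Omega)} \leq C \rho^{n - \eta_{p}(x, \rho)}$ (valid uniformly in $x$ and $\rho$ for $p \in \mathbb{P}_{\infty}^{\log}$), and a change of variables $r = 2s$, I obtain
$M f_{2}(y) \leq C \sup_{r > 2t} r^{-\eta_{p}(x, r)} \|f\|_{L_{p(\cdot)}(\tilde{B}(x, r))}$ for every $y \in \tilde{B}(x, t)$.
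Since the right-hand side is independent of $y$, taking the $L_{p(\cdot)}(\tilde{B}(x,t))$-norm and using $\|\chi_{\tilde{B}(x, t)}\|_{L_{p(\cdot)}} \leq C t^{\eta_{p}(x, t)}$ yields the required bound for $M f_{2}$. Combining the two estimates gives the theorem.

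The main obstacle, and the step where the hypothesis $p \in \mathbb{P}_{\infty}^{\log}(\Omega)$ is really used, is the uniform two-sided control $\|\chi_{B(x, r)}\|_{L_{p(\cdot)}} \asymp r^{\eta_{p}(x, r)}$. It is this estimate that produces the piecewise exponent $\eta_{p}$ globally: the local log-condition handles the small-radius regime $r \leq 1$ (with $n/p(x)$) while the decay condition at infinity handles $r > 1$ (with $n/p(\infty)$). A minor subtlety is that the supremum is over strict inequality $r > 2t$, but this is harmless either by a right-limit argument or by absorbing a bounded multiplicative constant.
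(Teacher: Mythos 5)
Your proof is correct. The paper does not actually supply a proof of this theorem --- it is imported from \cite{Guliyev} with the remark ``In the same paper, the next theorem was proved'' --- but your argument (the splitting $f=f_{1}+f_{2}$ with $f_{1}=f\chi_{\tilde{B}(x,2t)}$, the $L_{p(\cdot)}$-boundedness of $M$ for the near part, and H\"older together with $\|\chi_{B(x,r)}\|_{L_{p'(\cdot)}}\leq C r^{\,n-\eta_{p}(x,r)}$ for the far part) is precisely the standard proof from that source and is the same scheme the paper itself carries out when proving the analogous inequality \eqref{eq2.9} for the weighted Riesz potential, so there is nothing further to add.
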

We prove the next necessary inequality.
\begin{theorem} Let $p\in{\mathbb{P}}_{\infty}^{log}(\Omega)$. Then
\begin{equation}\label{eq2.7}
{||Mf||}_{{L}_{p(.)}(\tilde{B}(x,t))}\leq{C}{t}^{{\eta}_{p}(x,t)}\int_{t}^{\infty}{s}^{-{\eta}_{p}(x,s)-1}{||f||}_{{L}_{p(.)}(\tilde{B}(x,s))}ds,
\end{equation}
where $C$ is independent of $f,x,t$.
\end{theorem}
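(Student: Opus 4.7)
The plan is to combine Theorem \ref{th2.4} with Lemma \ref{lem2.1} in a short two-step chain: first bound $\|Mf\|_{L_{p(.)}(\tilde B(x,t))}$ by a supremum via Theorem \ref{th2.4}, then dominate that supremum by the claimed integral via Lemma \ref{lem2.1} applied to $f$ (not to $Mf$).

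More precisely, I would first invoke Theorem \ref{th2.4} to write
\begin{equation*}
\|Mf\|_{L_{p(.)}(\tilde B(x,t))}\;\le\;C\,t^{\eta_{p}(x,t)}\sup_{r>2t}r^{-\eta_{p}(x,r)}\|f\|_{L_{p(.)}(\tilde B(x,r))}.
\end{equation*}
For each fixed $r>2t$, I would then apply Lemma \ref{lem2.1} at the radius $r$ (extending $f$ by zero outside $\Omega$ so that $\|f\|_{L_{p(.)}(\tilde B(x,r))}=\|f\chi_{\Omega}\|_{L_{p(.)}(B(x,r))}$, which is the only mild technicality), obtaining
\begin{equation*}
r^{-\eta_{p}(x,r)}\|f\|_{L_{p(.)}(\tilde B(x,r))}\;\le\;C\int_{r}^{\infty}s^{-\eta_{p}(x,s)-1}\|f\|_{L_{p(.)}(\tilde B(x,s))}\,ds.
\end{equation*}
Since $r>t$ and the integrand is nonnegative, the right-hand side is majorized by $C\int_{t}^{\infty}s^{-\eta_{p}(x,s)-1}\|f\|_{L_{p(.)}(\tilde B(x,s))}\,ds$, which no longer depends on $r$. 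Taking the supremum over $r>2t$ and inserting into the Theorem \ref{th2.4} estimate produces \eqref{eq2.7}.

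I do not expect any essential obstacle: both ingredients are already stated as known results, and the only point requiring a line of care is the passage from full balls $B(x,r)$ in Lemma \ref{lem2.1} to truncated balls $\tilde B(x,r)=B(x,r)\cap\Omega$, which is handled by extending $f$ by zero. The step that does the actual work is the second one — converting the supremum into an integral — and it only needs monotonicity of $r\mapsto\|f\|_{L_{p(.)}(\tilde B(x,r))}$ and the pointwise bound provided by Lemma \ref{lem2.1}; no sharpness or duality argument is required.
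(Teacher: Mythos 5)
Your proposal is correct and follows essentially the same route as the paper: Theorem~\ref{th2.4} to pass from $\|Mf\|_{L_{p(.)}(\tilde B(x,t))}$ to a supremum over $r>2t$, then Lemma~\ref{lem2.1} applied to $f$ at each such $r$, and finally the observation that $\int_{r}^{\infty}\le\int_{t}^{\infty}$ for nonnegative integrands to remove the dependence on $r$. Your extra remark about extending $f$ by zero to reconcile $B(x,r)$ with $\tilde B(x,r)$ is a small point of care that the paper passes over silently, but it changes nothing essential.
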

\begin{proof}
Using Theorem~{\ref{th2.4}} and Lemma~{\ref{lem2.1}}, we have
$$
{||Mf||}_{{L}_{p(.)}(\tilde{B}(x,t))}\leq{C}{t}^{{\eta}_{p}(x,t)}\mathop{sup}_{r>2t}{r}^{-{\eta}_{p}(x,r)}{||f||}_{{L}_{p(.)}(\tilde{B}(x,r))}\leq{C}{t}^{{\eta}_{p}(x,t)}\mathop{sup}_{r>t}{r}^{-{\eta}_{p}(x,r)}{||f||}_{{L}_{p(.)}(\tilde{B}(x,r))}
$$
$$
\leq{C}{t}^{{\eta}_{p}(x,t)}\mathop{sup}_{r>t}\int_{r}^{\infty}{s}^{-{\eta}_{p}(x,s)-1}{\|f\|}_{{L}_{p(.)}(B(x,s))}ds={C}{t}^{{\eta}_{p}(x,t)}\int_{t}^{\infty}{s}^{-{\eta}_{p}(x,s)-1}{||f||}_{{L}_{p(.)}(\tilde{B}(x,s))}ds.
$$
\end{proof}
The next inequality was proved in~{\cite{Guliyev}}.
\begin{theorem}\label{th2.6} Let $p\in{\mathbb{P}}_{\infty}^{log}(\Omega)$ and $\alpha$, $q$ satisfy conditions $0<\alpha<n$, $\frac{1}{q(x)}=\frac{1}{p(x)}-\frac{\alpha}{n}$.
Then the next inequality holds
\begin{equation}\label{eq2.8}
{||{I}^{\alpha}f||}_{{L}_{q(.)}(\tilde{B}(x,t))}\leq{C}{t}^{{\eta}_{q}(x,t)}\int_{t}^{\infty}{r}^{-{\eta}_{q}(x,r)-1}{||f||}_{{L}_{p(.)}(\tilde{B}(x,r))}dr,
t>0
\end{equation}
where $C$ is independent of $x$ and $t$.
\end{theorem}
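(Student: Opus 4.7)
The plan is to use a Hedberg-type splitting $f = f_1 + f_2$ with $f_1 = f\chi_{\tilde{B}(x,2t)}$ and $f_2 = f - f_1$. For the local piece, I would invoke the (constant-order, unweighted) variable-exponent Sobolev inequality $\|I^\alpha g\|_{L_{q(.)}(\mathbb{R}^n)} \leq C\|g\|_{L_{p(.)}(\mathbb{R}^n)}$, which holds under $p \in \mathbb{P}_\infty^{\log}$ and $1/q(x) = 1/p(x) - \alpha/n$, to obtain
\begin{equation*}
\|I^\alpha f_1\|_{L_{q(.)}(\tilde{B}(x,t))} \leq C\|f\|_{L_{p(.)}(\tilde{B}(x,2t))}.
\end{equation*}
Since $\|f\|_{L_{p(.)}(\tilde{B}(x,r))}$ is nondecreasing in $r$, the tail integral satisfies
\begin{equation*}
\int_t^\infty r^{-\eta_q(x,r)-1}\|f\|_{L_{p(.)}(\tilde{B}(x,r))}\,dr \geq \|f\|_{L_{p(.)}(\tilde{B}(x,2t))}\int_{2t}^{4t} r^{-\eta_q(x,r)-1}\,dr \gtrsim t^{-\eta_q(x,t)}\|f\|_{L_{p(.)}(\tilde{B}(x,2t))},
\end{equation*}
where the final step is checked by treating $t \leq 1$ and $t > 1$ separately in view of the piecewise definition of $\eta_q$.

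For the nonlocal piece, the triangle inequality gives $|y-z| \geq \tfrac{1}{2}|x-z|$ whenever $y \in B(x,t)$ and $|x-z|>2t$, so dyadic decomposition of the exterior into the annuli $\{2^k t < |x-z| \leq 2^{k+1}t\}$ yields the pointwise bound
\begin{equation*}
|I^\alpha f_2(y)| \leq C\sum_{k=1}^\infty (2^k t)^{\alpha - n}\int_{\tilde{B}(x,2^{k+1}t)}|f(z)|\,dz.
\end{equation*}
Applying the variable-exponent Hölder inequality together with the characteristic-function estimate $\|\chi_{\tilde{B}(x,r)}\|_{L_{p'(.)}} \leq C r^{n - \eta_p(x,r)}$ (a standard consequence of $p \in \mathbb{P}_\infty^{\log}$) to the inner integral, then using the identity $\eta_p(x,r) - \alpha = \eta_q(x,r)$ and converting the resulting geometric sum into an integral, gives
\begin{equation*}
|I^\alpha f_2(y)| \leq C\int_t^\infty r^{-\eta_q(x,r)-1}\|f\|_{L_{p(.)}(\tilde{B}(x,r))}\,dr.
\end{equation*}
The right-hand side is independent of $y$, so taking the $L_{q(.)}(\tilde{B}(x,t))$-norm multiplies it by $\|\chi_{\tilde{B}(x,t)}\|_{L_{q(.)}} \leq C t^{\eta_q(x,t)}$, and adding the local and nonlocal estimates yields \eqref{eq2.8}.

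The main technical obstacle I foresee is the variable-exponent bookkeeping: rigorously justifying the norm estimates $\|\chi_{\tilde{B}(x,r)}\|_{L_{p'(.)}} \approx r^{n-\eta_p(x,r)}$ and $\|\chi_{\tilde{B}(x,t)}\|_{L_{q(.)}} \approx t^{\eta_q(x,t)}$ uniformly in $x$, and tracking the piecewise definition of $\eta_p,\eta_q$ across the threshold $r=1$. Both rest essentially on the decay log-condition embedded in $\mathbb{P}_\infty^{\log}$; once these estimates are secured, the argument is a direct adaptation of the classical constant-exponent Hedberg proof of Riesz potential estimates in Morrey-type spaces.
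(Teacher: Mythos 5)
Your proposal is correct and follows essentially the same route as the paper: the paper cites this theorem to Guliyev--Samko but proves the variable-order analogue (inequality \eqref{eq2.9}) by exactly this Hedberg splitting at radius $2t$, the variable-exponent Sobolev theorem plus Lemma \ref{lem2.1} for the local piece, and H\"older with the characteristic-function norm estimate for the far piece. The only cosmetic difference is that you organize the far-field integral by dyadic annuli while the paper uses the identity $|x-y|^{\alpha-n}=\beta|x-y|^{\alpha-n+\beta}\int_{|x-y|}^{\infty}s^{-\beta-1}\,ds$ with Fubini; both yield the same intermediate bound $\int_{2t}^{\infty}s^{\alpha-\eta_{p}(x,s)-1}\|f\|_{L_{p(.)}(B(x,s))}\,ds$.
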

The inequality~{\eqref{eq2.8}} holds, if we put $\frac{1}{{(1+|y|)}^{\gamma(y)}}{I}^{\alpha(.)}f(y)$ instead of ${I}^{\alpha}f(x)$. Namely, the following is true.
\begin{theorem} Let $p\in{\mathbb{P}}_{\infty}^{log}(\Omega)$ and the function  $\alpha(x)$, $q(x)$ satisfy the condition $\frac{1}{q(x)}=\frac{1}{p(x)}-\frac{\alpha(x)}{n}$.
Then for each ${x}\in{\mathbb{R}}^{n}, t>0$ the following inequality holds
\begin{equation}\label{eq2.9}
{||\frac{1}{{(1+|y|)}^{\gamma(y)}}{I}^{\alpha(.)}f||}_{{L}_{q(.)}(\tilde{B}(x,t))}\leq{C}{t}^{{\eta}_{q}(x,t)}{\int}_{t}^{\infty}{r}^{-{\eta}_{q}(x,r)-1}{||f||}_{{L}_{p(.)}(\tilde{B}(x,r))}dr
\end{equation}
\end{theorem}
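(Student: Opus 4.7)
The plan is to mirror the proof of Theorem~\ref{th2.6} for the constant-$\alpha$ case, splitting $f$ into near and far parts relative to the ball $\tilde{B}(x,2t)$ and treating each separately; the main new ingredient is to replace the classical $L_{p(.)}\!\to\!L_{q(.)}$ Sobolev bound for $I^{\alpha}$ by the weighted variable-order Sobolev bound from Theorem~\ref{th2.3}, which is precisely what accounts for the factor $(1+|y|)^{-\gamma(y)}$ on the left of~\eqref{eq2.9}.

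Concretely, I would fix $x\in\Omega$ and $t>0$, write $f=f_1+f_2$ with $f_1=f\chi_{\tilde{B}(x,2t)}$, $f_2=f-f_1$, and estimate
$$
\left\|\tfrac{1}{(1+|y|)^{\gamma(y)}}I^{\alpha(.)}f\right\|_{L_{q(.)}(\tilde{B}(x,t))}\le A_1+A_2,
$$
where $A_i$ is the corresponding norm of $(1+|y|)^{-\gamma(y)}I^{\alpha(.)}f_i$. For the local term $A_1$, Theorem~\ref{th2.3} applied on all of $\mathbb{R}^n$ gives $A_1\le C\|f\|_{L_{p(.)}(\tilde{B}(x,2t))}$. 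It remains to absorb this quantity into the integral on the right of~\eqref{eq2.9}: since $\|f\|_{L_{p(.)}(\tilde{B}(x,r))}\ge\|f\|_{L_{p(.)}(\tilde{B}(x,2t))}$ for $r\ge 2t$, and since the log- and decay-conditions imply $r^{-\eta_q(x,r)}\sim t^{-\eta_q(x,t)}$ for $r\in[2t,4t]$, the elementary estimate
$$
\|f\|_{L_{p(.)}(\tilde{B}(x,2t))}\le C\,t^{\eta_q(x,t)}\int_{2t}^{4t}r^{-\eta_q(x,r)-1}\|f\|_{L_{p(.)}(\tilde{B}(x,r))}\,dr
$$
does the job.

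For the far term $A_2$, I would use the pointwise inequality
$$
|I^{\alpha(.)}f_2(y)|\le\int_{\Omega\setminus B(x,2t)}\frac{|f(z)|}{|y-z|^{n-\alpha(y)}}\,dz,
$$
together with $|y-z|\ge\tfrac12|x-z|$ for $y\in B(x,t)$, $z\notin B(x,2t)$. Decomposing $\{z:|x-z|\ge 2t\}$ into dyadic annuli and applying variable-exponent Hölder on each, I would arrive at
$$
|I^{\alpha(.)}f_2(y)|\le C\int_{2t}^{\infty}r^{\alpha(y)-\eta_p(x,r)-1}\|f\|_{L_{p(.)}(\tilde{B}(x,r))}\,dr,
$$
valid for $y\in\tilde{B}(x,t)$. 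Taking the $L_{q(.)}(\tilde{B}(x,t))$ norm of $(1+|y|)^{-\gamma(y)}$ times this right-hand side and using $\eta_p(x,r)-\alpha(x)=\eta_q(x,r)$ then produces precisely the desired factor $r^{-\eta_q(x,r)-1}$ on the right of~\eqref{eq2.9}.

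The main obstacle will be the passage from $\alpha(y)$ to $\alpha(x)$ in the exponent $r^{\alpha(y)-\eta_p(x,r)-1}$: this mismatch must be controlled \emph{for every} $r\in(2t,\infty)$ and every $y\in\tilde B(x,t)$, so the local log-condition on $\alpha$ alone is not enough. This is precisely the role of the weight $(1+|y|)^{-\gamma(y)}$: combined with the decay condition on $p$ and the choice $\gamma(y)=A_{\infty}\alpha(y)(1-\alpha(y)/n)$, it must compensate the discrepancy uniformly, so that the right-hand side is majorized by the integral with the $x$-frozen exponent $\eta_q(x,r)$. Verifying this compatibility is the delicate technical step; once it is in place, the rest of the argument is essentially identical to the proof of Theorem~\ref{th2.6}.
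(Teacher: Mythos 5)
Your plan reproduces the paper's proof essentially step for step: the same splitting $f=f_1+f_2$ at radius $2t$; Theorem~\ref{th2.3} for the near part followed by absorption of $\|f\|_{L_{p(.)}(B(x,2t))}$ into the tail integral (the paper invokes Lemma~\ref{lem2.1} here, and your monotonicity argument over $[2t,4t]$ is the same estimate in disguise); and, for the far part, a comparison of $|y-z|$ with $|x-z|$ followed by variable-exponent H\"older (the paper uses the identity $|x-y|^{\alpha-n}=\beta|x-y|^{\alpha-n+\beta}\int_{|x-y|}^{\infty}s^{-\beta-1}\,ds$ with $\beta>n/q_{-}$ rather than dyadic annuli, which is a cosmetic difference). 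The one point worth noting is that the ``delicate technical step'' you flag --- passing from $\alpha(y)$ to $\alpha(x)$ in the far-field exponent --- is not actually carried out in the paper either: its estimate of the $f_2$ term is written with a constant $\alpha$ throughout, so your plan is neither more nor less complete than the published argument on this point.
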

\begin{proof}
We represent the function $f$ as
$f(x)={f}_{1}(x)+{f}_{2}(x)$,
${f}_{1}(x)=f(x){\chi}_{\tilde{B}(x,2t)}$,
${f}_{2}(x)=f(x){\chi}_{{\Omega}\backslash{\tilde{B}(x,2t)}}$.
Then
$$
\frac{1}{{(1+|y|)}^{\gamma(y)}}{I}^{\alpha(.)}f(y)=\frac{1}{{(1+|y|)}^{\gamma(y)}}{I}^{\alpha(.)}{f}_{1}(y)+\frac{1}{{(1+|y|)}^{\gamma(y)}}{I}^{\alpha(.)}{f}_{2}(y).
$$
By Theorem~{\ref{th2.3}},
$$
{||\frac{1}{{(1+|y|)}^{\gamma(y)}}{I}^{\alpha(.)}{f}_{1}||}_{{L}_{q(.)}(\tilde{B}(x,t))}\leq{||\frac{1}{{(1+|y|)}^{\gamma(y)}}{I}^{\alpha(.)}{f}_{1}||}_{{L}_{q(.)}(({R}^{n})}\leq{C}{\|{f}_{1}\|}_{{L}_{p(.)}({R}^{n})}=C{\|f\|}_{{L}_{p(.)}(B(x,2t))}.
$$
By Lemma~{\ref{lem2.1}},
\begin{equation}\label{eq2.10}
{||\frac{1}{{(1+|y|)}^{\gamma(y)}}{I}^{\alpha(.)}{f}_{1}||}_{{L}_{q(.)}(\tilde{B}(x,t))}\leq{C}{t}^{{\eta}_{q}(x,t)}\int_{2t}^{\infty}{r}^{-{\eta}_{q}(x,r)-1}{||f||}_{{L}_{p(.)}(\tilde{B}(x,r))}dr.
\end{equation}
If $|x-z|\leq{t}$ and $|z-y|\geq2t$, we have $\frac{1}{2}|z-y|\leq|x-y|\leq\frac{3}{2}|z-y|$. Using the inequality $\frac{1}{{(1+|y|)}^{\gamma(y)}}\leq1$, we infer
$$
{||\frac{1}{{(1+|y|)}^{\gamma(y)}}{I}^{\alpha(.)}{f}_{2}||}_{{L}_{q(.)}(\tilde{B}(x,t))}\leq{\|\int_{{R}^{n}\backslash{B(x,2t)}}{|z-y|}^{\alpha-n}f(y)dy\|}_{{L}_{q(.)}(\tilde{B}(x,t))}
$$
$$
\leq{C}\int_{{R}^{n}\backslash{B(x,2t)}}{|x-y|}^{\alpha-n}|f(y)|dy{\|{\chi}_{B(x,t)}\|}_{{L}_{q(.)}({R}^{n})}.
$$
Choosing $\beta>\frac{n}{{q}_{-}}$, we obtain
$$
\int_{{R}^{n}\backslash{B(x,2t)}}{|x-y|}^{\alpha-n}|f(y)|dy=\beta\int_{{R}^{n}\backslash{B(x,2t)}}{|x-y|}^{\alpha-n+\beta}|f(y)|(\int_{|x-y|}^{\infty}{s}^{-\beta-1}ds)dy
$$
=
$$
=\beta\int_{2t}^{\infty}{s}^{-\beta-1}(\int_{y\in{R}^{n}:2t\leq|x-y|\leq{s}}{|x-y|}^{\alpha-n+\beta}|f(y)|dy)ds
$$
$$
\leq{C}\int_{2t}^{\infty}{s}^{-\beta-1}{\|f\|}_{{L}_{p(.)}(B(x,s))}{\|{|x-y|}^{\alpha-n+\beta}\|}_{{L}_{{p}^{'}(.)}(B(x,s))}ds
$$
$$
\leq{C}\int_{2t}^{\infty}{s}^{\alpha-{\eta}_{p}(x,s)-1}{\|f\|}_{{L}_{p(.)}(B(x,s))}ds.
$$
Therefore
$$
{||\frac{1}{{(1+|y|)}^{\gamma(y)}}{I}^{\alpha(.)}{f}_{2}||}_{{L}_{q(.)}(\tilde{B}(x,t))}\leq{C}{t}^{{\eta}_{p}(x,t)}\int_{2t}^{\infty}{s}^{-{\eta}_{q}(x,s)-1}{\|f\|}_{{L}_{p(.)}(B(x,s))}ds
$$
which, together with~{\eqref{eq2.10}}, yields~{\eqref{eq2.9}}.
\end{proof}
Let $u$ and $v$ be a positive measurable functions.The dual Hardy operator is defined by the identity
\begin{equation*}
{\tilde{H}}_{v,u}f(x)=v(x)\int_{x}^{\infty}f(t)u(t)dt,  \ x\in{\mathbb R}^{n}.
\end{equation*}
Suppose that $a$ is a positive fixed number. Let ${\theta}_{1,a}(x)=\mathop{essinf}_{y\in[x,a)}{\theta}_{1}(y),$
$$
{\tilde{\theta}}_{1}(x)=\begin{cases}
{\theta}_{1,a}(x)  &\text{if $x\in[0,a]$;}\\
{\overline{\theta}}_{1}=const  &\text{if $x\in[a,\infty)$;}\\
\end{cases},
{\mathbb{\theta}}_{1}=\mathop{essinf}_{x\in{R}_{+}}{\theta}_{1}(x),
{\Theta}_{2}=\mathop{essup}_{x\in{R}_{+}}{\theta}_{2}(x)
$$
The next theorem was proved in~{\cite{EKM}}.
\begin{theorem}\label{th2.8} Let ${\theta}_{1}(x)$ and  ${\theta}_{2}(x)$ measurable functions on ${R}_{+}$. Suppose that there exists a positive number $a$  for all $x>a$ holds  ${\theta}_{1}(x)=\overline{{\theta}}_{1}=const$, ${\theta}_{2}(x)=\overline{{\theta}_{2}}=const$ and  $1<{\theta}_{1}\leq{\tilde{\theta}}_{1}(x)\leq{\theta}_{2}(x)\leq{{\Theta}_{2}}<\infty$ for a.a.
If
\begin{equation*}
G=\mathop{sup}_{t>0}\int_{0}^{t}{[v(x)]}^{{\theta}_{2}(x)}{(\int_{t}^{\infty}{u}^{{\tilde{\theta}_{1}}^{'}(x)}(\tau)d\tau)}^{\frac{{\theta}_{2}(x)}{{({\theta}_{1})}^{'}(x)}}dx<\infty
\end{equation*}
then the operator ${\tilde{H}}_{v,u}$ is bounded from ${L}_{{\theta}_{1}(.)}({R}^{+})$ to ${L}_{{\theta}_{2}(.)}({R}^{+})$.
\end{theorem}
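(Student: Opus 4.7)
The strategy is to reduce the weighted Hardy inequality on variable exponent Lebesgue spaces to a Sawyer-type testing situation by first moving to the dual side. Since $1<{\theta}_{1}\leq{\Theta}_{2}<\infty$ guarantees reflexivity, the norm conjugate formula
$$
\|g\|_{L_{\theta_2(\cdot)}({\mathbb R}_+)}\;\approx\;\sup\Bigl\{\int_0^\infty g(x)h(x)\,dx:\|h\|_{L_{\theta_2'(\cdot)}({\mathbb R}_+)}\leq 1\Bigr\}
$$
is available. Writing ${\tilde H}_{v,u}f(x)=v(x)\int_x^\infty f(t)u(t)\,dt$ and applying Fubini, the pairing $\int_0^\infty{\tilde H}_{v,u}f(x)h(x)\,dx$ becomes $\int_0^\infty f(t)u(t)\bigl(\int_0^t v(x)h(x)\,dx\bigr)\,dt$. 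Hölder's inequality in the duality between $L_{\theta_1(\cdot)}$ and $L_{\theta_1'(\cdot)}$ then peels off a factor $\|f\|_{L_{\theta_1(\cdot)}}$ and leaves the task of bounding the forward Hardy-type operator $h\mapsto u(t)\int_0^t v(x)h(x)\,dx$ from $L_{\theta_2'(\cdot)}$ into $L_{{\tilde\theta}_1'(\cdot)}$.

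The next step is the Sawyer-style dyadic decomposition. I would choose a sequence $\{t_k\}$ so that $\int_{t_k}^\infty u^{{\tilde\theta}_1'(\cdot)}(\tau)\,d\tau$ decreases geometrically, split the inner Hardy integral on each layer $(t_k,t_{k+1})$ as $\int_0^{t_k}vh+\int_{t_k}^t vh$, and apply Hölder on the shorter piece with the essentially constant exponent pair $({\tilde\theta}_1(\xi),{\tilde\theta}_1'(\xi))$. The replacement of $\theta_1'$ by the larger ${\tilde\theta}_1'$ is harmless because ${\tilde\theta}_1\le\theta_1$, with equality beyond $x=a$. Performing the outer integration over the variable $x\in(0,t)$ produces precisely the factor
$$
\int_0^t[v(x)]^{\theta_2(x)}\Bigl(\int_t^\infty u^{{\tilde\theta}_1'(\xi)}(\tau)\,d\tau\Bigr)^{\theta_2(x)/\theta_1'(x)}dx,
$$
which is controlled uniformly in $t$ by the hypothesis $G<\infty$. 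Summing the dyadic estimates uses the pointwise embedding $\ell^{{\tilde\theta}_1}\hookrightarrow\ell^{\theta_2}$, legitimate by the assumption ${\tilde\theta}_1(x)\leq\theta_2(x)$.

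The principal obstacle is the bookkeeping required to move between the genuine variable exponent $\theta_1(\cdot)$ and its frozen essential-infimum version ${\tilde\theta}_1(\cdot)$ without accumulating unbounded Hölder constants inside each dyadic layer; this is where the stabilization of $\theta_1$ and $\theta_2$ to constants on $(a,\infty)$ becomes essential, reducing the argument on that part of the half-line to the classical Bradley--Muckenhoupt one-weight theorem, while on $(0,a)$ only a uniformly bounded number of almost-constant dyadic layers must be treated by genuine variable-exponent modular techniques. A second delicate point is verifying that the conjugate-norm formula and the application of variable-exponent Hölder are valid with equivalent constants in both directions, so that the two reductions can be chained without losing the testing characterization.
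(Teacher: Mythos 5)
The first thing to note is that the paper does not prove this statement at all: Theorem~\ref{th2.8} is quoted from Edmunds--Kokilashvili--Meskhi~\cite{EKM} (``The next theorem was proved in~\cite{EKM}''), so there is no in-paper proof to compare your argument against. Your outline therefore has to be judged on its own merits.

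As it stands it is a strategy, not a proof. The duality reduction (conjugate norm formula, Fubini, H\"older in the $L_{\theta_1(\cdot)}$--$L_{\theta_1'(\cdot)}$ pairing) is sound and standard, and a Sawyer-type dyadic decomposition is indeed the engine behind results of this kind. But the two places where you stop --- (i) controlling the H\"older constants when you pass between $\theta_1'$ and $\tilde\theta_1'$ inside each dyadic layer on $(0,a)$, and (ii) checking that the outer integration ``produces precisely the factor'' appearing in $G$ --- are exactly where the content of the theorem lies, and you explicitly defer both rather than resolve them. Moreover, after you dualize, the operator to be bounded is $h\mapsto u(t)\int_0^t v(x)h(x)\,dx$ from $L_{\theta_2'(\cdot)}$ to $L_{\theta_1'(\cdot)}$, whose natural testing condition integrates $u^{\tilde\theta_1'}$ over $(t,\infty)$ against powers of $\int_0^\tau v^{\theta_2}$, i.e.\ the \emph{dual} form of $G$ with the roles of the two weights and the two exponent functions interchanged. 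For constant exponents the two forms are equivalent by an elementary computation; for variable exponents this equivalence is itself a nontrivial claim (the exponent $\theta_2(x)/\theta_1'(x)$ sits inside the $x$-integral and cannot simply be pulled out), and your sketch silently assumes it. Until (i), (ii) and this primal/dual passage are actually carried out, the proposal has a genuine gap; the full details are precisely the subject of~\cite{EKM}.
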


\section{The Main Results}
\begin{theorem}\label{th3.1} Assume that $p(.)\in{\mathbb{P}}_{\infty}^{log}(\Omega)$, and ${\theta}_{1}(x)$ and  ${\theta}_{2}(x)$ are measurable functions on ${R}_{+}$. Suppose that there exists a positive number $a$ such that for all $t>a$ we have ${\theta}_{1}(x)=\overline{{\theta}}_{1}=const$, ${\theta}_{2}(x)=\overline{{\theta}_{2}}=const$ and  $1<{\theta}_{1}\leq{\tilde{\theta}}_{1}(x)\leq{\theta}_{2}(x)\leq{{\Theta}_{2}}<\infty$ for a.a., the positive measurable functions ${w}_{1}$ и ${w}_{2}$ satisfy the condition
\begin{equation}\label{eq3.1}
A=\mathop{sup}_{x\in\Omega,t>0}\int_{0}^{t}{({w}_{2}(x,r))}^{{\theta}_{2}(r)}{\Bigl(\int_{t}^{\infty}{\Bigl(\frac{1}{{w}_{1}(x,s)s}\Bigr)}^{{[\tilde{\theta}_{1}(r)]}^{'}}ds\Bigr)}^{\frac{{\theta}_{2}(r)}{{[\tilde{\theta}_{1}(r)]}^{'}}}dr<\infty
\end{equation}
Then the maximal operator $M$ from ${GM}_{p(.),{\theta}_{1}(.),{w}_{1}(.)}(\Omega)$ to ${GM}_{p(.),{\theta}_{2}(.),{w}_{2}(.)}(\Omega)$ is bounded.
\end{theorem}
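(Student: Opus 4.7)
The plan is to reduce the global Morrey-type estimate for $M$ to the boundedness of the dual Hardy operator $\tilde H_{v,u}$ in variable-exponent Lebesgue spaces on $\mathbb R_+$ (Theorem~\ref{th2.8}), using the pointwise inequality~\eqref{eq2.7} as the bridge. First I would fix $x\in\Omega$ and apply~\eqref{eq2.7} to obtain
\begin{equation*}
w_{2}(x,t)\,t^{-\eta_{p}(x,t)}\|Mf\|_{L_{p(.)}(\tilde B(x,t))}
\;\le\; C\, w_{2}(x,t)\int_{t}^{\infty}\frac{1}{w_{1}(x,s)\,s}\,F_{x}(s)\,ds,
\end{equation*}
where $F_{x}(s):=w_{1}(x,s)\,s^{-\eta_{p}(x,s)}\|f\|_{L_{p(.)}(\tilde B(x,s))}$. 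By the definition of the $GM_{p(.),\theta_{1}(.),w_{1}(.)}$-norm, $\|F_{x}\|_{L_{\theta_{1}(.)}(0,\infty)}$ is (after taking the outer $\sup_{x}$) exactly $\|f\|_{GM_{p(.),\theta_{1}(.),w_{1}(.)}(\Omega)}$.

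Next I would read the right-hand side above as $C\,\tilde H_{v,u}F_{x}(t)$ with the choice $v(t)=w_{2}(x,t)$ and $u(s)=1/(w_{1}(x,s)s)$. With these weights, the condition $G<\infty$ required by Theorem~\ref{th2.8} becomes
\begin{equation*}
\sup_{t>0}\int_{0}^{t}(w_{2}(x,r))^{\theta_{2}(r)}
\Bigl(\int_{t}^{\infty}\Bigl(\tfrac{1}{w_{1}(x,s)\,s}\Bigr)^{[\tilde\theta_{1}(r)]'}\!ds\Bigr)^{\frac{\theta_{2}(r)}{[\tilde\theta_{1}(r)]'}}dr\;<\;\infty,
\end{equation*}
which is precisely hypothesis~\eqref{eq3.1}, and the constant is bounded by $A$ uniformly in $x\in\Omega$. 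Taking the $L_{\theta_{2}(.)}(0,\infty)$-norm in $t$ on both sides of the pointwise inequality above and invoking Theorem~\ref{th2.8} yields
\begin{equation*}
\|w_{2}(x,\cdot)\,(\cdot)^{-\eta_{p}(x,\cdot)}\|Mf\|_{L_{p(.)}(\tilde B(x,\cdot))}\|_{L_{\theta_{2}(.)}(0,\infty)}
\;\le\;C\,\|F_{x}\|_{L_{\theta_{1}(.)}(0,\infty)}
\end{equation*}
with $C$ depending only on $A$ and on the structural constants in Theorem~\ref{th2.8}. Taking $\sup_{x\in\Omega}$ then gives the desired norm inequality.

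The main obstacle I anticipate is bookkeeping rather than conceptual: one must verify that all the assumptions of Theorem~\ref{th2.8} on $\theta_{1},\theta_{2}$ are preserved for the concrete choice of $v,u$ (the hypotheses on $\theta_{i}$ in the statement are copied verbatim from Theorem~\ref{th2.8}, so this is essentially automatic), and that the constant produced by Theorem~\ref{th2.8} depends on $v,u$ only through the quantity $G$, so that taking the supremum over $x$ afterwards is legitimate. Once that is checked, the argument is a direct chain: inequality~\eqref{eq2.7}, rewriting via $F_{x}$, Theorem~\ref{th2.8}, supremum in $x$.
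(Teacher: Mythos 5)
Your proposal is correct and follows essentially the same route as the paper's own proof: both reduce the estimate via inequality~\eqref{eq2.7} to the dual Hardy operator $\tilde H_{v,u}$ with exactly the choices $v(r)=w_2(x,r)$, $u(s)=1/(w_1(x,s)s)$, and $g=F_x$, and then invoke Theorem~\ref{th2.8} with condition~\eqref{eq3.1} playing the role of $G<\infty$ uniformly in $x$. Your explicit remark that the constant from Theorem~\ref{th2.8} must depend on $v,u$ only through $G$ (so that $\sup_{x\in\Omega}$ can be taken afterwards) is a point the paper leaves implicit.
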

\begin{corollary}\label{cor3.1}Let $p(.)\in{\mathbb{P}}_{\infty}^{log}(\Omega)$, ${w}_{1}(x,r)={w}_{2}(x,r)={r}^{\beta(x)}$. If
\begin{equation}\label{eq3.2}
\mathop{inf}_{x\in\Omega,r>0}(\beta(x)+1){[\tilde{\theta}_{1}(r)]}^{'}>1,
\end{equation}
\begin{equation}\label{eq3.3}
\mathop{sup}_{x\in\Omega,t>0}\int_{0}^{t}{r}^{{\theta}_{2}(r)\beta(x)}\frac{{t}^{[-(\beta(x)+1){[\tilde{\theta}_{1}(r)]}^{'}+1]{\frac{{\theta}_{2}(r)}{{[\tilde{\theta}_{1}(r)]}^{'}}}}}{{[(\beta(x)+1){[\tilde{\theta}_{1}(r)]-1}]}^{\frac{{\theta}_{2}(r)}{{[\tilde{\theta}_{1}(r)]}^{'}}}}dr<\infty.
\end{equation}
Then the maximal operator $M$ from ${GM}_{p(.),{\theta}_{1}(.),{r}^{\beta(.)}}(\Omega)$ to ${GM}_{p(.),{\theta}_{2}(.),{r}^{\beta(.)}}(\Omega)$ is bounded.
\end{corollary}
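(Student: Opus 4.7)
The plan is to derive Corollary~\ref{cor3.1} directly from Theorem~\ref{th3.1} by specializing the weights to ${w}_{1}(x,r)={w}_{2}(x,r)={r}^{\beta(x)}$ and reducing condition~\eqref{eq3.1} to the pair of conditions~\eqref{eq3.2}--\eqref{eq3.3}. First I would note that under the chosen weights we have $\frac{1}{{w}_{1}(x,s)s}={s}^{-(\beta(x)+1)}$, so the inner integral appearing in~\eqref{eq3.1} becomes
\begin{equation*}
\int_{t}^{\infty}{\Bigl(\frac{1}{{w}_{1}(x,s)s}\Bigr)}^{{[\tilde{\theta}_{1}(r)]}^{'}}ds=\int_{t}^{\infty}{s}^{-(\beta(x)+1){[\tilde{\theta}_{1}(r)]}^{'}}ds.
\end{equation*}

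Next, condition~\eqref{eq3.2}, namely $\inf_{x\in\Omega,r>0}(\beta(x)+1){[\tilde{\theta}_{1}(r)]}^{'}>1$, ensures that the exponent $-(\beta(x)+1){[\tilde{\theta}_{1}(r)]}^{'}$ is strictly less than $-1$ uniformly in $x$ and $r$, so the above integral converges and equals
\begin{equation*}
\frac{{t}^{-(\beta(x)+1){[\tilde{\theta}_{1}(r)]}^{'}+1}}{(\beta(x)+1){[\tilde{\theta}_{1}(r)]}^{'}-1}.
\end{equation*}
Raising this to the power $\frac{{\theta}_{2}(r)}{{[\tilde{\theta}_{1}(r)]}^{'}}$ and multiplying by ${({w}_{2}(x,r))}^{{\theta}_{2}(r)}={r}^{{\theta}_{2}(r)\beta(x)}$, I would match the resulting integrand exactly with the integrand in~\eqref{eq3.3}. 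Thus~\eqref{eq3.3} is precisely a restatement of $A<\infty$ from~\eqref{eq3.1} for this family of weights.

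Having verified this equivalence, an application of Theorem~\ref{th3.1} yields the boundedness of $M$ from ${GM}_{p(.),{\theta}_{1}(.),{r}^{\beta(.)}}(\Omega)$ into ${GM}_{p(.),{\theta}_{2}(.),{r}^{\beta(.)}}(\Omega)$, which is the claim. The only delicate point, which I would present carefully, is that the uniformity ($\inf$ over $x\in\Omega$, $r>0$) in~\eqref{eq3.2} is essential: without strict separation from $1$ of the quantity $(\beta(x)+1){[\tilde{\theta}_{1}(r)]}^{'}$, the denominator in~\eqref{eq3.3} could vanish and the supremum in~\eqref{eq3.1} would fail to be finite. All other steps are routine substitution and computation of an elementary power integral, so I do not anticipate any serious obstacle.
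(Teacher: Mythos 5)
Your proposal is correct and follows essentially the same route as the paper: specialize condition~\eqref{eq3.1} of Theorem~\ref{th3.1} to ${w}_{1}={w}_{2}={r}^{\beta(x)}$, evaluate the inner power integral using~\eqref{eq3.2} to guarantee convergence, and recognize the result as~\eqref{eq3.3}. Your explicit computation of the inner integral (and your remark on why the uniform infimum in~\eqref{eq3.2} is needed to keep the denominator away from zero) merely fills in details the paper leaves implicit, and it also correctly yields ${[\tilde{\theta}_{1}(r)]}^{'}$ in the denominator where the paper's displayed formula~\eqref{eq3.3} appears to have a typographical slip.
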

The following theorems give Spanne-type results on the boundedness of the Riesz potential ${I}^{\alpha}$ in global Morrey-type spaces with variable exponent ${GM}_{p(.),\theta(.),w(.)}(\Omega)$.
In the following theorem $\alpha=const$.
\begin{theorem}\label{th3.2}
Assume that $p(.)\in{\mathbb{P}}_{\infty}^{log}(\Omega)$, the constant number  $\alpha$ satisfies the condition  $\alpha>0$, ${(\alpha p(.))}_{+}=\mathop{sup}_{x\in\Omega}\alpha p(x)<n$. Let ${\theta}_{1}(x)$ and  ${\theta}_{2}(x)$ be measurable functions on ${R}_{+}$. Suppose that there exists a positive number $a$ such that for all $x>a$ we have ${\theta}_{1}(x)=\overline{{\theta}}_{1}=const$, ${\theta}_{2}(x)=\overline{{\theta}_{2}}=const$ and  $1<{\theta}_{1}\leq{\tilde{\theta}}_{1}(x)\leq{\theta}_{2}(x)\leq{{\Theta}_{2}}<\infty$ for a.a., the functions  ${p}_{1}(x)$ и ${p}_{2}(x)$ satisfy  $\frac{1}{{p}_{2}(x)}=\frac{1}{{p}_{1}(x)}-\frac{\alpha(x)}{n}$, and the functions ${w}_{1}$ и ${w}_{2}$ satisfy the condition
\begin{equation}\label{eq3.4}
T=\mathop{sup}_{x\in\Omega,t>0}\int_{0}^{t}{({w}_{2}(x,r))}^{{\theta}_{2}(r)}{\Bigl(\int_{t}^{\infty}{\Bigl(\frac{{s}^{\alpha-1}}{{w}_{1}(x,s)}\Bigr)}^{{[\tilde{\theta}_{1}(r)]}^{'}}ds\Bigr)}^{\frac{{\theta}_{2}(r)}{{[\tilde{\theta}_{1}(r)]}^{'}}}dr<\infty.
\end{equation}
Then the operators ${I}_{\alpha}$ и ${M}_{\alpha}$ from ${GM}_{{p}_{1}(.),{\theta}_{1}(.),{w}_{1}(.))}(\Omega)$ to ${GM}_{{p}_{2}(.),{\theta}_{2}(.),{w}_{2}(.)}(\Omega)$ are bounded.
\end{theorem}
\begin{corollary}\label{cor3.2}Let $p(.)\in{P}_{\infty}^{log}(\Omega)$, ${w}_{1}(x,r)={w}_{2}(x,r)={r}^{\beta(x)}$. If
\begin{equation}\label{eq3.5}
\mathop{sup}_{x\in\Omega,r>0}(\alpha-\beta(x)-1){[\tilde{\theta}_{1}(r)]}^{'}<-1,
\end{equation}
\begin{equation}\label{eq3.6}
\mathop{sup}_{x\in\Omega,t>0}\int_{0}^{t}{r}^{{\theta}_{2}(r)\beta(x)}\frac{{t}^{[[\alpha-\beta(x)-1]{[\tilde{\theta}_{1}(r)]}^{'}+1]\frac{{\theta}_{2}(r)}{{[\tilde{\theta}_{1}(r)]}^{'}}}}{[\beta(x)+1-\alpha]{[\tilde{\theta}_{1}(r)]}^{'}-1}dr<\infty,
\end{equation}
then the operators ${I}_{\alpha}$ и ${M}_{\alpha}$ from ${GM}_{p(.),{\theta}_{1}(.),{r}^{\beta}}(\Omega)$ to ${GM}_{p(.),{\theta}_{2}(.),{r}^{\beta}}(\Omega)$ are bounded.
\end{corollary}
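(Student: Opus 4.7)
The plan is to obtain Corollary~\ref{cor3.2} as an immediate specialization of Theorem~\ref{th3.2} by substituting the explicit weight $w_{1}(x,r)=w_{2}(x,r)=r^{\beta(x)}$ into the integral condition~\eqref{eq3.4} and verifying that~\eqref{eq3.5}--\eqref{eq3.6} together guarantee the finiteness of the resulting expression $T$. All structural hypotheses on $p(\cdot),\theta_{1}(\cdot),\theta_{2}(\cdot)$ required by Theorem~\ref{th3.2} are already assumed in Corollary~\ref{cor3.2}, so this verification of~\eqref{eq3.4} is all that remains.

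First I would evaluate the inner integral in~\eqref{eq3.4}. With $w_{1}(x,s)=s^{\beta(x)}$ it becomes
$$\int_{t}^{\infty}\Bigl(\frac{s^{\alpha-1}}{s^{\beta(x)}}\Bigr)^{[\tilde{\theta}_{1}(r)]'}ds=\int_{t}^{\infty}s^{(\alpha-\beta(x)-1)[\tilde{\theta}_{1}(r)]'}\,ds.$$
Hypothesis~\eqref{eq3.5} forces the exponent to stay uniformly below $-1$, so this integral converges and equals
$$\frac{t^{(\alpha-\beta(x)-1)[\tilde{\theta}_{1}(r)]'+1}}{(\beta(x)+1-\alpha)[\tilde{\theta}_{1}(r)]'-1}.$$
Raising this expression to the power $\theta_{2}(r)/[\tilde{\theta}_{1}(r)]'$, multiplying by $(w_{2}(x,r))^{\theta_{2}(r)}=r^{\beta(x)\theta_{2}(r)}$, integrating in $r$ over $(0,t)$ and taking $\sup_{x\in\Omega,\,t>0}$ reproduces exactly the integral in~\eqref{eq3.6}. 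Hence~\eqref{eq3.5}--\eqref{eq3.6} imply $T<\infty$, and Theorem~\ref{th3.2} then delivers boundedness of $I_{\alpha}$ and $M_{\alpha}$ between the claimed global Morrey-type spaces.

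The only point requiring some care is the uniformity built into~\eqref{eq3.5}: the strict inequality $\sup_{x,r}(\alpha-\beta(x)-1)[\tilde{\theta}_{1}(r)]'<-1$ must be used to obtain a gap $\varepsilon>0$ with $(\alpha-\beta(x)-1)[\tilde{\theta}_{1}(r)]'\le-1-\varepsilon$, so that the closed-form evaluation of the inner integral is valid uniformly in $x$ and $r$ and the supremum in~\eqref{eq3.4} can be controlled through the single condition~\eqref{eq3.6}. After that, everything is routine bookkeeping of exponents, and no further analytic input is needed beyond Theorem~\ref{th3.2}.
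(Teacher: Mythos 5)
Your proposal is correct and follows essentially the same route as the paper: specialize condition~\eqref{eq3.4} of Theorem~\ref{th3.2} to $w_{1}=w_{2}=r^{\beta(x)}$, use~\eqref{eq3.5} to guarantee convergence of the inner integral and evaluate it in closed form, and observe that the resulting expression is the integral in~\eqref{eq3.6}. The only (cosmetic) discrepancy is that your computation correctly carries the power $\frac{\theta_{2}(r)}{[\tilde{\theta}_{1}(r)]'}$ on the denominator $(\beta(x)+1-\alpha)[\tilde{\theta}_{1}(r)]'-1$, which the printed form of~\eqref{eq3.6} omits --- evidently a typo in the statement, mirroring the correctly typeset Corollary~\ref{cor3.1}.
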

In the following theorem $\alpha(x)$ is a variable exponent.
\begin{theorem}\label{th3.3} Assume that $p(.)\in{\mathbb{P}}_{\infty}^{log}(\Omega)$, the function $\alpha(x)$ satisfies the condition $\alpha(x)>0$, ${(\alpha(.) p(.))}_{+}=\mathop{sup}_{x\in\Omega}\alpha(x) p(x)<n$. Let ${\theta}_{1}(x)$ and  ${\theta}_{2}(x)$ be measurable functions on ${R}_{+}$. Suppose that there exists a positive number $a$ such that for all $x>a$ we have ${\theta}_{1}(x)=\overline{{\theta}}_{1}=const$, ${\theta}_{2}(x)=\overline{{\theta}_{2}}=const$ and  $1<{\theta}_{1}\leq{\tilde{\theta}}_{1}(x)\leq{\theta}_{2}(x)\leq{{\Theta}_{2}}<\infty$ for a.a., the functions ${p}_{1}(x)$ и ${p}_{2}(x)$ satisfy  $\frac{1}{{p}_{2}(x)}=\frac{1}{{p}_{1}(x)}-\frac{\alpha(x)}{n}$, the functions ${w}_{1}$ и ${w}_{2}$ satisfy the condition
\begin{equation*}
T=\mathop{sup}_{x\in\Omega,t>0}\int_{0}^{t}{({w}_{2}(x,r))}^{{\theta}_{2}(r)}{\Bigl(\int_{t}^{\infty}{\Bigl(\frac{{s}^{\alpha(x)-1}}{{w}_{1}(x,s)}\Bigr)}^{{[\tilde{\theta}_{1}(r)]}^{'}}ds\Bigr)}^{\frac{{\theta}_{2}(r)}{{[\tilde{\theta}_{1}(r)]}^{'}}}dr<\infty.
\end{equation*}
Then the operators $\frac{1}{{(1+|x|)}^{\gamma(x)}}{I}^{\alpha(.)}$ и $\frac{1}{{(1+|x|)}^{\gamma(x)}}{M}^{\alpha(.)}$ from ${GM}_{{p}_{1}(.),{\theta}_{1}(.),{w}_{1}(.))}(\Omega)$ to ${GM}_{{p}_{2}(.),{\theta}_{2}(.),{w}_{2}(.)}(\Omega)$ are bounded.
\end{theorem}
In the following theorem $\alpha=const$.
\begin{theorem}
Assume that $p(.)\in{P}_{\infty}^{log}(\Omega)$, bounded measurable functions ${\theta}_{1}(x)$,
${\theta}_{2}(x)$ satisfy $1<{\theta}_{1-}(.)\leq{\theta}_{1}(.)\leq{\theta}_{1+}(.)$, $1<{\theta}_{2-}(.)\leq{\theta}_{2}(.)\leq{\theta}_{2+}(.)$, and there exists $a$ such that for all $t>a$ we have ${\theta}_{1}(t)=const$, ${\theta}_{2}(t)=const$. Let the positive measurable functions ${w}_{1}(x,r)$, ${w}_{2}(x,r)$ satisfy the condition
$$
\mathop{sup}_{x\in\Omega}{\Bigl\|{w}_{2}(x,r){||\frac{1}{t{w}_{1}(x,t)}||}_{{L}_{{\theta}_{1}^{'}(.)}(r,\infty)}\Bigr\|}_{{L}_{{\theta}_{2}(.)}(0,\infty)}<\infty.
$$
Then the singular integral operator $T$ from ${GM}_{p(.),{\theta}_{1}(.),{w}_{1}(.)}$ to ${GM}_{p(.),{\theta}_{2}(.),{w}_{2}(.)}$ is bounded.
\end{theorem}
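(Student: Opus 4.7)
The plan is to follow the two-step scheme already established for $M$ in Theorem~\ref{th3.1} and for $I^{\alpha}$ in Theorem~\ref{th3.2}: first prove a local $L_{p(.)}$-estimate on balls for $Tf$ analogous to inequality~\eqref{eq2.7}, then convert it into the global Morrey bound via the dual Hardy-operator theorem (Theorem~\ref{th2.8}).

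\textbf{Step 1 (local estimate).} The first task is to establish
\begin{equation*}
\|Tf\|_{L_{p(.)}(\tilde{B}(x,t))} \leq C\, t^{\eta_{p}(x,t)} \int_{t}^{\infty} s^{-\eta_{p}(x,s)-1} \|f\|_{L_{p(.)}(\tilde{B}(x,s))}\, ds.
\end{equation*}
The argument parallels the proof given for $I^{\alpha(.)}$: decompose $f=f_{1}+f_{2}$ with $f_{1}=f\chi_{\tilde{B}(x,2t)}$ and $f_{2}=f\chi_{\Omega\setminus\tilde{B}(x,2t)}$. For $Tf_{1}$, the variable-exponent $L_{p(.)}(\mathbb{R}^{n})$-boundedness of Calder\'on--Zygmund operators (due to Diening and others, cited earlier) together with Lemma~\ref{lem2.1} applied to $\|f\|_{L_{p(.)}(B(x,2t))}$ gives the required estimate for $y\in \tilde{B}(x,t)$. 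For $Tf_{2}$, use the standard kernel bound $|K(y,z)|\leq C|y-z|^{-n}$ together with $\tfrac{1}{2}|z-x|\leq|z-y|\leq\tfrac{3}{2}|z-x|$ for $|z-x|\geq 2t$, then rewrite $|x-z|^{-n}=n\int_{|x-z|}^{\infty}s^{-n-1}ds$, apply Fubini and H\"older in variable exponents, and use the norm estimate $\|\chi_{B(x,s)}\|_{L_{p'(.)}(\mathbb{R}^{n})}\leq C s^{n-\eta_{p}(x,s)}$ exactly as in the proof of inequality~\eqref{eq2.9}.

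\textbf{Step 2 (global estimate via the dual Hardy operator).} Set
\begin{equation*}
g(s) := w_{1}(x,s)\, s^{-\eta_{p}(x,s)} \|f\|_{L_{p(.)}(\tilde{B}(x,s))},
\end{equation*}
so that by definition $\sup_{x\in\Omega}\|g\|_{L_{\theta_{1}(.)}(0,\infty)} = \|f\|_{GM_{p(.),\theta_{1}(.),w_{1}(.)}(\Omega)}$. Substituting $\|f\|_{L_{p(.)}(\tilde{B}(x,s))}=s^{\eta_{p}(x,s)} g(s)/w_{1}(x,s)$ into the local estimate of Step~1 and multiplying by $w_{2}(x,t) t^{-\eta_{p}(x,t)}$ yields
\begin{equation*}
w_{2}(x,t) t^{-\eta_{p}(x,t)} \|Tf\|_{L_{p(.)}(\tilde{B}(x,t))} \leq C\, w_{2}(x,t)\int_{t}^{\infty}\frac{g(s)}{s\, w_{1}(x,s)}\,ds = C\,\tilde{H}_{v,u}g(t),
\end{equation*}
with $v(t):=w_{2}(x,t)$ and $u(s):=(s\,w_{1}(x,s))^{-1}$. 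Taking the $L_{\theta_{2}(.)}(0,\infty)$ norm in $t$ and applying Theorem~\ref{th2.8} gives $\|\tilde{H}_{v,u}g\|_{L_{\theta_{2}(.)}} \leq C\,\|g\|_{L_{\theta_{1}(.)}}$, and taking the supremum over $x\in\Omega$ produces the required inequality $\|Tf\|_{GM_{p(.),\theta_{2}(.),w_{2}(.)}} \leq C\,\|f\|_{GM_{p(.),\theta_{1}(.),w_{1}(.)}}$.

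\textbf{Main obstacle.} The decisive technical point is verifying that the hypothesis, stated as a mixed-norm condition on $w_{2}(x,r)\|(tw_{1}(x,t))^{-1}\|_{L_{\theta_{1}'(.)}(r,\infty)}$, matches (or majorises) the integral quantity $G$ appearing in Theorem~\ref{th2.8} uniformly in $x\in\Omega$. Unwinding the variable-exponent modular definition of $\|\cdot\|_{L_{\theta_{2}(.)}}$ and of $\|(tw_{1}(x,t))^{-1}\|_{L_{\theta_{1}'(.)}(r,\infty)}$ produces essentially the expression inside the $\sup_{t>0}$ defining $G$; the condition $\theta_{1},\theta_{2}$ being constant for $t>a$ ensures the equivalence. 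Once this identification is in place, Theorem~\ref{th2.8} is applied pointwise in $x$, and the final supremum passes through because the resulting constant depends only on $\sup_{x}G(x)$, which is precisely the assumed hypothesis.
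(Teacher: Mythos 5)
The paper never actually proves this theorem: the proof section covers only Theorems~\ref{th3.1}--\ref{th3.3} and the two corollaries, and the singular-integral statement is left without any argument. Your proposal supplies exactly the argument the authors' template suggests: first a local estimate
$$
{\|Tf\|}_{{L}_{p(.)}(\tilde{B}(x,t))}\leq C\,{t}^{{\eta}_{p}(x,t)}\int_{t}^{\infty}{s}^{-{\eta}_{p}(x,s)-1}{\|f\|}_{{L}_{p(.)}(\tilde{B}(x,s))}\,ds,
$$
obtained by the $f=f_{1}+f_{2}$ splitting, the global ${L}_{p(.)}$-boundedness of Calder\'on--Zygmund operators plus Lemma~\ref{lem2.1} for $f_{1}$, and the kernel bound with the $\beta$-trick for $f_{2}$ (this estimate is in fact already proved in the cited Guliyev--Samko paper on unbounded sets, so it could simply be quoted); then the reduction to the dual Hardy operator ${\tilde{H}}_{v,u}$ with $v(t)={w}_{2}(x,t)$, $u(s)={(s{w}_{1}(x,s))}^{-1}$ and an appeal to Theorem~\ref{th2.8}, exactly as in the proof of Theorem~\ref{th3.1}. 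This is certainly the intended route, and the structure is sound.

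The one point that needs more than the sketch you give it is the ``main obstacle'' you yourself flag. The hypothesis of the theorem is a genuine iterated variable-exponent \emph{norm} condition, while the quantity $G$ in Theorem~\ref{th2.8} is a \emph{modular} condition in which, moreover, the inner integral $\int_{t}^{\infty}{u}^{{\tilde{\theta}_{1}}^{'}(x)}(\tau)\,d\tau$ carries the exponent frozen at the outer variable $x$ --- for each fixed $x$ it is a constant-exponent integral, not the variable-exponent norm ${\|u\|}_{{L}_{{\theta}_{1}^{'}(.)}(t,\infty)}$. Passing between the two requires the uniform two-sided power comparison between norm and modular (available because ${\theta}_{1},{\theta}_{2}$ are bounded above and below and constant beyond $a$), and this should be written out rather than asserted, since as stated the hypothesis does not literally coincide with $G<\infty$ uniformly in $x$. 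To be fair, this mismatch is inherited from the paper itself --- condition~\eqref{eq3.1} of Theorem~\ref{th3.1} is written in modular form while the hypothesis of the present theorem is written in norm form --- so you have correctly identified the genuine gap; it just remains to close it.
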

\begin{proof}[Proof of Theorem~${\ref{th3.1}}$] According to the definition and to Theorem~{\ref{th2.8}}, using the Holder inequality with variable exponents $\theta$, ${\theta}^{'}$, we infer
$$
{||Mf||}_{{GM}_{p(.),{\theta}_{2}(.),{w}_{2}(.)}(\Omega)}=\mathop{sup}_{x\in\Omega}{||\frac{{w}_{2}(x,r)}{{r}^{{\eta}_{p}(x,r)}}{||Mf||}_{{L}_{p(.)}(B(x,r))}||}_{{L}_{{\theta}_{2}(.)}(0,\infty)}
$$
$$
\leq{C}\mathop{sup}_{x\in\Omega}{||{{w}_{2}(x,r)}\int_{r}^{\infty}{t}^{-{\eta}_{p}(x,t)-1}{||f||}_{{L}_{p(.)}(B(x,t))}dt||}_{{L}_{{\theta}_{2}(.)}(0,\infty)},
$$
we denote
$$
{\tilde{H}}_{v,u}f(r)=v(r)\int_{r}^{\infty}g(t)u(t)dt,
$$
here
$v(r)={w}_{2}(x,r)$,
$g(t)=\frac{{w}_{1}(x,t)}{{t}^{{\eta}_{p}(x,t)}}{||f||}_{{L}_{p(.)}(B(x,t))}$,
$u(t)=\frac{1}{{w}_{1}(x,t)t}$ for every fixed $x\in\Omega$.
Then the condition~{\eqref{eq2.7}} has the form~{\eqref{eq3.1}}, from which boundedness of the operator ${\tilde{H}}_{v,w}f(r)$  from ${L}_{{\theta}_{1}(.)}(0,\infty)$ to ${L}_{{\theta}_{2}(.)}(0,\infty)$ follows. Consequently, we have
$$
{\|Mf\|}_{{GM}_{q(.),{\theta}_{2}(.),{w}_{2}(.)}(\Omega)}\leq{A}\cdot\mathop{sup}_{x\in\Omega}{\|{w}_{1}(x,t){t}^{-{\eta}_{p}(x,t)}{\|f\|}_{{L}_{p(.)}(B(x,t))}\|}_{{L}_{{\theta}_{1}(.)}(0,\infty)}=
$$
$$
=A\cdot{\|f\|}_{{GM}_{p(.),{\theta}_{1}(.),{w}_{1}(.)}},
$$
this means that the operator $M$ from ${GM}_{p(.),{\theta}_{1}(.),{w}_{1}(.)}$ to ${GM}_{q(.),{\theta}_{2}(.),{w}_{2}(.)}$ is bounded.
\end{proof}
\begin{proof}[Proof of Corollary~${\ref{cor3.1}}$]
The condition~{\eqref{eq3.1}} has the form
$$
\mathop{sup}_{x\in\Omega,t>0}\int_{0}^{t}{r}^{{\theta}_{2}(r)\beta(x)}{\Bigl(\int_{t}^{\infty}{s}^{-[\beta(x)+1]{[\tilde{\theta}_{1}(r)]}^{'}}ds\Bigr)}^{\frac{{\theta}_{2}(r)}{{[\tilde{\theta}_{1}(r)]}^{'}}}dr<\infty.
$$
By the convergence of the inner integral, we obtain the conditions~{\eqref{eq3.2}} and~{\eqref{eq3.3}}.
\end{proof}
\begin{proof}[Proof of Theorem~${\ref{th3.2}}$]
Using the definition and the Theorem~{\ref{th2.6}}, we have
$$
{||{I}^{\alpha}||}_{{GM}_{q(.),{\theta}_{2}(.),{w}_{2}(.)}(\Omega)}=\mathop{sup}_{x\in\Omega}{\|{w}_{2}(x,r){r}^{-{\eta}_{q}(x,r)}{\|{I}_{\alpha}f\|}_{{L}_{q(.)}(B(x,r))}\|}_{{L}_{{\theta}_{2}(.)}(0,\infty)}\leq
$$
$$
\leq{C}\mathop{sup}_{x\in\Omega}{||{w}_{2}(x,r)\int_{r}^{\infty}{t}^{-{\eta}_{q}(x,t)-1}{||f||}_{{L}_{p(.)}(B(x,t))}dt||}_{{L}_{{\theta}_{2}(.)}(0,\infty)}
$$
We denote
$$
{\tilde{H}}_{v,u}f(r)=v(r)\int_{r}^{\infty}g(t)u(t)dt,
$$
we denote by
$v(r)={w}_{2}(x,r)$,
$g(t)=\frac{{w}_{1}(x,t)}{{t}^{{\eta}_{p}(x,t)}}{||f||}_{{L}_{p(.)}(B(x,t))}$,
$u(t)=\frac{{t}^{{\eta}_{p}(x,t)-{\eta}_{q}(x,t)-1}}{{w}_{1}(x,t)}$ for every fixed $x\in\Omega$.
Then the condition~{\eqref{eq2.7}} has the form~{\eqref{eq3.4}}, from which boundedness of the operator ${\tilde{H}}_{v,w}f(r)$  from ${L}_{{\theta}_{1}(.)}(0,\infty)$ to ${L}_{{\theta}_{2}(.)}(0,\infty)$ follows. Consequently, we have
$$
{\|{I}^{\alpha}f\|}_{{GM}_{q(.),{\theta}_{2}(.),{w}_{2}(.)}(\Omega)}\leq{T}\cdot\mathop{sup}_{x\in\Omega}{\|{w}_{1}(x,t){t}^{-{\eta}_{p}(x,t)}{\|f\|}_{{L}_{p(.)}(B(x,t))}\|}_{{L}_{{\theta}_{1}(.)}(0,\infty)}=
$$
$$
=T\cdot{\|f\|}_{{GM}_{p(.),{\theta}_{1}(.),{w}_{1}(.)}},
$$
this means that the operator ${I}^{\alpha}$ from ${GM}_{{p}_{1}(.),{\theta}_{1}(.),{w}_{1}(.)}$ to ${GM}_{{p}_{2}(.),{\theta}_{2}(.),{w}_{2}(.)}$ is bounded.
\end{proof}
\begin{proof}[Proof of Corollary~${\ref{cor3.2}}$]
The condition~{\eqref{eq3.4}} takes the form
$$
\mathop{sup}_{x\in\Omega,t>0}\int_{0}^{t}{r}^{{\theta}_{2}(r)\beta(x)}{\Bigl(\int_{t}^{\infty}{s}^{(\alpha-1-\beta(x)){[\tilde{\theta}_{1}(r)]}^{'}}ds\Bigr)}^{{\frac{{\theta}_{2}(r)}{{[\tilde{\theta}_{1}(r)]}^{'}}}}dr<\infty.
$$
By the convergence of the inner integral, we deduce the conditions~{\eqref{eq3.5}} and~{\eqref{eq3.6}}.
\end{proof}
\begin{proof}[Proof of Theorem~${\ref{th3.3}}$] The proof of this theorem the same as the proof of Theorem~{\ref{th3.2}}, it is sufficient to put $\frac{1}{{(1+|x|)}^{\gamma(x)}}{I}^{\alpha(.)}f(x)$ instead of ${I}^{\alpha}f(x)$.
\end{proof}





\def\bibname{\vspace*{-30mm}{\centerline{\normalsize References}}}

\vskip 1 cm \footnotesize
\begin{flushleft}
Nurzhan~Bokayev, Zhomart~Onerbek \\ 
Department of Mechanics and Mathematics\\ 
L.N. Gumilyov Eurasian National University \\ 
5 Munaitpasov St,\\ 
010008 Astana, Nur-Sultan, Kazakhstan \\ 
E-mail: onerbek.93@mail.ru
\end{flushleft}


 \end{document}